\newtheorem*{thm*}{Theorem}
\newtheorem*{conj*}{Conjecture}
\newtheorem*{remark}{Remark}
\newtheorem{thm}{Theorem}[section]
\newtheorem{lem}{Lemma}[section]
\newtheorem{cor}[thm]{Corollary}
\newtheorem{prop}[thm]{Proposition}
\newtheorem{definition}[thm]{Definition}
\newcommand{\ord}{\mathrm{ord}}
\newcommand{\Z}{\mathbb{Z}}
\newcommand{\Q}{\mathbb{Q}}
\newcommand{\N}{\mathbb{N}}
\newcommand{\Mod}[1]{\ (\mathrm{mod}\ #1)}
\newcommand{\F}{\mathbb{F}}
\newcommand{\tor}{\mathrm{tor}}
\mathchardef\mathcomma=\mathcode`,
\newtheorem*{theorem*}{Theorem}
\renewcommand\expandafter\subsection\expandafter{%
\expandafter\@fb@secFB\subsection
}%
\numberwithin{equation}{section}
\newcommand\footnoteref[1]{\protected@xdef\@thefnmark{\ref{#1}}\@footnotemark}
\begin{document}
\title{A Short Note on Inadmissible Coefficients of Weight $2$ and $2k+1$ Newforms}
\author{Malik Amir and Andreas Hatziiliou }
\address{Department of Mathematics, \'Ecole Polytechnique F\'ed\'erale de Lausanne}
\email{malik.amir.math@gmail.com}
\address{Department of Mathematics and Statistics, McGill University}
\email{andreas.hatziiliou@mail.mcgill.ca}
\keywords{Lucas Sequences, Lehmer's Conjecture, Modular Forms, $L$-Functions, Elliptic Curves}

\begin{abstract} 

Soit $f(z)=q+\sum_{n\geq 2}a(n)q^n$ une forme de Hecke normalisée de poids $k$ à coefficients entiers possédant une représentation galoisienne modulo 2 triviale. On généralise les résultats de Amir et Hong présentés dans \cite{AH} pour le poids $k=2$ en éliminant ou en localisant toutes les valeurs impaires $|\ell|<100$ des coéfficient de Fourier $a(n)$ pour $n$ respectant certaines congruences. On étudie aussi le cas des poids impairs $k\geq 1$ de formes de Hecke dont le caractère est donné par un caractère quadratique de Dirichlet.

Let $f(z)=q+\sum_{n\geq 2}a(n)q^n$ be a weight $k$ normalized newform with integer coefficients and trivial residual mod 2 Galois representation. We extend the results of Amir and Hong in \cite{AH} for $k=2$ by ruling out or locating all odd prime values $|\ell|<100$ of their Fourier coefficients $a(n)$ when $n$ satisfies some congruences. We also study the case of odd weights $k\geq 1$ newforms where the nebentypus is given by a quadratic Dirichlet character.
\end{abstract}
\epigraph{It’s not the violence of the few that scares me, it’s the silence of the many.}{\textit{Palestine will be free}}
\maketitle
\section{Introduction and Statement of the Results}
In an article entitled \textit{``On certain arithmetical functions''} \cite{Ram}, Ramanujan introduced the $\tau$-function in 1916, known as the Fourier coefficients of the weight $12$ modular form $$\Delta(z)=q\prod_{n=1}^\infty (1-q^n)^{24}:=\sum_{n=1}^\infty \tau(n)q^n=q-24q^2+252q^3-1472q^4+4830q^5-...$$ where throughout $q:=e^{2\pi i z}$. It was conjectured by Ramanujan that the $\tau$-function is multiplicative and this offered a glimpse into a much more general theory known today as the theory of Hecke operators. Despite its importance in the large web of mathematics and physics, basic properties of $\tau(n)$ are still unknown. The most famous example is Lehmer's conjecture about the nonvanishing of $\tau(n)$. Lehmer proved that if $\tau(n)=0$, then $n$ must be a prime \cite{Lehmer}.  One may be interested in studying odd values taken by $\tau(n)$ or, more generally, the coefficients of any newform. This is the question we consider as a variation of Lehmer's original speculation.

For an odd number $\alpha$, Murty, Murty and Shorey \cite{MMS} proved using linear forms in logarithms that $\tau(n)\neq\alpha$ for all $n$ sufficiently large. However, the bounds that they obtained are huge and computationally impractical. Recently, using the theory of Lucas sequences, Balakrishnan, Craig, Ono and Tsai proved in \cite{BCO} and \cite{BCOT}, together with work of Dembner and Jain in \cite{DJ}, that $\tau(n)=\ell$ has no solution for $|\ell|<100$ an odd prime. In addition, Hanada and Madhukara proved in \cite{HM} that $\tau(n)=\alpha$ has no solution for $|\alpha|<100$ an odd integer. Following these ideas, Amir and Hong investigated weight $2$ and $3$ newforms corresponding to modular elliptic curves and a special family of $K3$ surfaces in \cite{AH}. 

In this paper, we extend slightly the results of \cite{AH} on inadmissible coefficients for $L$-functions of modular elliptic curves and give a procedure to rule out odd prime values $\ell$, positive or negative, as coefficients of any normalized newform of odd weight $k\geq 1$ with integer coefficients having trivial residual mod $2$ Galois representation and a quadratic Dirichlet character. For the rest of this paper, whenever we say newform of weight $k$, we talk about a newform with the aforementioned properties. In the case of weight $2$ newforms, we have the following results. 

\begin{thm}\label{thmInadmissible}
Suppose $f(z)=q+\sum_{n\geq 2}a(n)q^n\in S_2^{new}(\Gamma_0(N))\cap \Z[[q]]$ has trivial residual mod 2 Galois representation, namely, $E/\Q$ is an elliptic curve of conductor $N$ with a rational $2$-torsion point. Then the following are true.  
\begin{enumerate}
\item[] If $E/\Q$ has a rational $3$-torsion point, then for $n>1$ and $\gcd(n,2\cdot 3\cdot N)=1$, we have 
\begin{itemize}
\item [1.] If $a(n)=7,13,19,31,37$, then $n=p^2$ and $p=2 \Mod{3}$.
\item[2.]If $a(n)=29$ then $n=p^{d-1}=13^4$ and $a(p)=\pm 2$.
\item[3.]If $a(n)=41$ then $n=p^{d-1}=43^4$ and $a(p)=\pm 4$.
\item[4.]If $a(n)=-19$ then $n=p^{d-1}=7^4$ and $a(p)=\pm 2$.
\item[5.] If $a(n)=-31$ then $n=p^{d-1}=7^4$ and $a(p)=\pm 4$.
\item[6.]If $a(n)=-79$ then $n=p^{d-1}=167^4$ and $a(p)=\pm 8$.
\end{itemize}
Furthermore, $$a(n)\not \in \{-1,1,5,-7,11,-13, 17,23,-37,-43,47,53,59,-61,-67,71,-73,83,89,-97\}.$$

\item[] If $E/\Q$ has a rational $5$-torsion point, then for $n>1$ and $\gcd(n,2\cdot5\cdot N)=1$, we have
\begin{itemize}
\item[1.] If $\ell\equiv 1\Mod{5}$ and $a(n)=\ell$, then $n=p^2$ and $p\equiv 4\Mod{5}$. 
\item[2.] If $\ell\equiv 2\Mod{5}$, $\ell\neq -3$ and $a(n)=\ell$, then $n=p^2$ and $p\equiv 2\Mod{5}$. 
\item[3.] If $\ell\equiv 3\Mod{5}$, $\ell\neq 3$ and $a(n)=\ell$, then $n=p^2$ and $p\equiv 1,3\Mod{5}$.
\end{itemize}
Furthermore, $$a(n)\not \in \{-1,1,-11,19,29,-31,-41,59,-61,-71,79,89,-691\}. $$
\end{enumerate}
\end{thm}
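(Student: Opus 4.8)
The plan is to run the Lucas-sequence machinery of Balakrishnan--Craig--Ono--Tsai and Amir--Hong while exploiting the two congruences forced by the rational torsion. First I would invoke the standard reduction: if $a(n)=\ell$ with $\ell$ an odd prime and $\gcd(n,2\cdot 3\cdot N)=1$ (respectively $\gcd(n,2\cdot 5\cdot N)=1$), then by multiplicativity of the Hecke eigenvalues, together with the classification of prime powers $p^m$ with $a(p^m)=\pm 1$, one may assume $n=p^{d-1}$ is a prime power with $p\nmid 6N$ (resp. $p\nmid 10N$) and $a(p^{d-1})=\pm\ell$. Writing $\alpha,\beta$ for the Frobenius roots at $p$, so that $\alpha+\beta=a(p)$ and $\alpha\beta=p$, one has $a(p^{d-1})=u_d:=(\alpha^d-\beta^d)/(\alpha-\beta)$, a Lucas number, and the factorisation $u_d=\prod_{1<m\mid d}\Phi_m(\alpha,\beta)$ into rational integers $\Phi_m(\alpha,\beta)$.

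The key input is that a rational $2$-torsion point makes $a(p)=\Phi_2(\alpha,\beta)$ even for every $p\nmid 2N$, while a rational $3$-torsion point (resp. $5$-torsion point) forces $a(p)\equiv 1+p\pmod 3$ (resp. $a(p)\equiv 1+p\pmod 5$) for $p\nmid 3N$ (resp. $p\nmid 5N$). The parity already eliminates every even $d$, since then $\Phi_2(\alpha,\beta)\mid u_d$ would make $u_d$ even; hence $d$ is odd and $\geq 3$. For $d=3$ one has $u_3=\Phi_3(\alpha,\beta)=a(p)^2-p=\pm\ell$, and substituting $a(p)$ even together with $a(p)\equiv 1+p\pmod 3$ (resp. $\pmod 5$), then running over the admissible residues of $p$, pins down the residue of $\ell$ in each case. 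This produces the congruence conditions on $p$ recorded in the theorem and shows that, with $n=p^2$, $\ell\equiv 2\pmod 3$ (resp. $\ell\equiv 4\pmod 5$) cannot occur; the exceptional values $\pm 3$ survive because $a(p)^2-p=\pm 3$ is not obstructed.

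For odd $d\geq 5$ I would combine the Hasse bound $|\alpha|=|\beta|=\sqrt p$ with the polar identity $|\Phi_d(\alpha,\beta)|=p^{\varphi(d)/2}\,\bigl|\Phi_d(e^{-2i\theta})\bigr|$, where $\cos\theta=a(p)/(2\sqrt p)$ and $\Phi_d$ on the right is the usual cyclotomic polynomial. Requiring $|\Phi_d(\alpha,\beta)|<100$ then confines $e^{-2i\theta}$ to shrinking arcs around the primitive $d$-th roots of unity, which, after clearing denominators, forces $a(p)$ and $p$ to satisfy a fixed Pell-type relation; for $d=5$ this is $(2a(p)^2-3p)^2-5p^2=4\ell$. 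An integral point on such a curve with $p$ prime lies in a finite, effectively computable set by Siegel's theorem, so a short machine computation enumerates the solutions; for $d>30$ one may instead quote the Bilu--Hanrot--Voutier primitive divisor theorem, which shows $u_d$ has a primitive prime divisor and hence cannot equal the prime $\ell$ once the other $\Phi_m(\alpha,\beta)$ are forced to be $\pm1$ (again impossible by parity). Intersecting the surviving pairs $(p,a(p))$ with the mod $3$ (resp. mod $5$) congruence leaves precisely the sporadic solutions $n=13^4,\,43^4,\,7^4,\,167^4$ recorded in items 2--6 and removes every remaining $\ell$ in the two exclusion lists. I expect the $d=5$ step to be the main obstacle: converting ``$\Phi_5(\alpha,\beta)=\pm\ell$ with $|\ell|<100$'' into a genuinely finite search, and then sieving those finitely many candidates by the torsion congruences and by primality of $p$, is where all the sporadic quadruples appear and must be exhibited.
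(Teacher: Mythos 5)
Your blueprint is essentially the one the paper follows: reduce to $n=p^{d-1}$ via multiplicativity and the exclusion of $\pm 1$ (Lemma \ref{not1}), kill even $d$ by the parity of $a(p)$, restrict $(p,d)$ modulo $3$ or $5$ via $a(p^{d-1})\equiv 1+p+\cdots+p^{d-1}\pmod r$ (Lemma \ref{congruences}), use the Bilu--Hanrot--Voutier/Abouzaid classification of defective Lucas numbers together with $u_{d'}\mid u_d$ to force $d$ to be an odd prime dividing $|\ell|(|\ell|-1)(|\ell|+1)$, and finish each surviving $d$ with an explicit Diophantine equation. Your $d=3$ computation $\ell\equiv 1+p+p^2\pmod r$ reproduces the stated residue conditions exactly, and in the $5$-torsion case the congruences in fact exclude $d\equiv 0\pmod 5$ altogether, which is why only $n=p^2$ survives there.

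The one step that would fail as written is $d=5$. The relation $(2a(p)^2-3p)^2-5p^2=4\ell$, read as you present it --- a ``Pell-type relation'' in the quantities $V=2a(p)^2-3p$ and $p$ --- is a genus-zero conic with two rational points at infinity: it has infinitely many integral solutions, Siegel's theorem does not apply to it, and Siegel is in any case ineffective, so it cannot by itself produce an ``effectively computable'' finite list. The finiteness you need appears only after eliminating $p$ rather than $a(p)$: from $a(p)^4-3pa(p)^2+p^2=\ell$ one gets $p=\bigl(3a(p)^2\pm\sqrt{5a(p)^4+4\ell}\,\bigr)/2$, so the problem becomes integral points on the genus-one quartic $z^2=5x^4+4\ell$ (with $x=a(p)$ even), which is finite and effectively solvable by Baker-type methods; this is the hyperelliptic-curve step of Balakrishnan--Craig--Ono and Dembner--Jain that the paper leans on, and it is exactly where $13^4$, $43^4$, $7^4$, $167^4$ arise (e.g.\ $x=2$, $z=14$, $p=13$ for $\ell=29$), with the torsion congruence on $p$ then discarding the spurious branches. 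A smaller quibble: a primitive prime divisor of $u_d$ for $d>30$ does not by itself preclude $u_d=\pm\ell$; you still need that the primitive divisor must then be $\ell$ itself, whence $d=\ell$ or $d\mid\ell\pm1$ by Corollary 2.2 of \cite{BHV} --- although the constraint $d\mid|\ell|(|\ell|^2-1)$ already bounds $d$, so the real role of \cite{BHV} here is the classification of defective values, not a bound on $d$.
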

\begin{thm}
Let $E/\Q$ be an elliptic curve of conductor $N$ with a $2$ and $3$-torsion point. Let $n>1$ and $gcd(n,2\cdot 3\cdot N)=1$. If $\ell\equiv 2\Mod{3},\ell\neq 5$ and the odd prime divisors $d$ of $|\ell|(|\ell|-1)(|\ell|+1)$ are not congruent to $2\Mod{3}$, then $a(n)\neq \ell$. 
\end{thm}
\begin{thm}
Let $E/\Q$ be an elliptic curve of conductor $N$ with a $2$ and $5$-torsion point. Let $n>1$ and $\gcd(n,2\cdot 5\cdot N)=1$.
\begin{itemize}
\item[1.]  If $\ell\equiv 1\Mod{5}$ and the odd prime divisors $d$ of $|\ell|(|\ell|-1)(|\ell|+1)$ are not congruent to $1,3\Mod{5}$, then $a(n)\neq \ell$. 
\item[2.] If $\ell\equiv 2\Mod{5}, \ell\neq -3$ and the odd prime divisors $d$ of $|\ell|(|\ell|-1)(|\ell|+1)$ are not congruent to $2,3\Mod{5}$, then $a(n)\neq \ell$. 
\item[3.] If $\ell\equiv 3\Mod{5},\ell\neq 3$ and the odd prime divisors $d$ of $|\ell|(|\ell|-1)(|\ell|+1)$ are not congruent to $2,3\Mod{5}$, then $a(n)\neq \ell$. 
\item[4.] If $\ell\equiv 4\Mod{5}$ and the odd prime divisors $d$ of $|\ell|(|\ell|-1)(|\ell|+1)$ are not congruent to $2,4\Mod{5}$, then $a(n)\neq \ell$. 
\end{itemize}
\end{thm}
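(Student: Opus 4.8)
The plan is to argue by contradiction, reducing the equation $a(n)=\ell$ to a single value of a Lucas sequence and then squeezing out the torsion congruences. First I would record the two inputs coming from $E$: the rational $2$-torsion point gives $2\mid\#E(\F_p)$, hence $a(p)\equiv p+1\equiv 0\pmod 2$ for all $p\nmid 2N$; the rational $5$-torsion point gives $5\mid\#E(\F_p)$, hence $a(p)\equiv p+1\pmod 5$ for all $p\nmid 5N$. Using the parity statement together with the Bilu--Hanrot--Voutier theorem on primitive divisors of Lucas sequences, one checks that $a(p^e)\neq\pm 1$ for every $p\nmid 2\cdot 5\cdot N$ and every $e\geq 1$: the cases $e=1,2$ are a short congruence computation modulo $5$, and for $e\geq 3$ the term $U_{e+1}$ of the relevant Lucas sequence has a primitive prime divisor, the evenness of $a(p)$ removing the finitely many sporadic sequences. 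Since $\ell$ is prime and $\gcd(n,2\cdot 5\cdot N)=1$, multiplicativity of $a$ then forces $n=p^{d-1}$ for a single prime $p\nmid 2\cdot 5\cdot N$ with $a(p^{d-1})=\ell$, and, following the reduction used in \cite{AH} and \cite{BCOT}, one may assume $d$ is an odd prime ($d\neq 2$ because $a(p)$ is even).

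Next I would package the arithmetic at $p$ into a Lucas sequence. Let $\alpha,\beta$ be the Frobenius eigenvalues at $p$, so that $\alpha+\beta=a(p)$ and $\alpha\beta=p$ (the nebentypus being trivial); then $a(p^{d-1})=U_d$ where $U_m=(\alpha^m-\beta^m)/(\alpha-\beta)$ and $\Delta=a(p)^2-4p<0$. Since $d$ is an odd prime, $|\ell|\mid U_d$, and $|\ell|\nmid U_1=1$, the rank of apparition of $|\ell|$ in $(U_m)_m$ is exactly $d$; therefore $d\mid |\ell|-\left(\frac{\Delta}{|\ell|}\right)$ when $|\ell|\nmid\Delta$, whence $d\mid |\ell|^2-1$, while if $|\ell|\mid\Delta$ the rank of apparition of $|\ell|$ equals $|\ell|$, forcing $d=|\ell|$. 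In every case $d$ is an odd prime divisor of $|\ell|(|\ell|-1)(|\ell|+1)$. The two values $\ell=\pm 3$ are set aside in cases $2$ and $3$ precisely because there $d=3$ is the only option and the claimed conclusion would in fact be false.

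The decisive step is the analysis modulo $5$, which runs exactly parallel to the analysis modulo $3$ used in the preceding $3$-torsion statement. From $a(p)\equiv p+1\pmod 5$ one gets $\Delta\equiv(p-1)^2\pmod 5$ and the factorization $X^2-a(p)X+p\equiv(X-1)(X-p)\pmod 5$. If $p\equiv 1\pmod 5$, then $\alpha\equiv\beta\equiv 1$ modulo the prime above $5$ (in a possibly ramified quadratic extension of $\Z_5$), so $U_d=\sum_{i=0}^{d-1}\alpha^i\beta^{d-1-i}\equiv d\pmod 5$, and $U_d=\ell$ gives $d\equiv\ell\pmod 5$. If $p\not\equiv 1\pmod 5$, then $\{\alpha,\beta\}\equiv\{1,p\}\pmod 5$ with $\alpha\not\equiv\beta$, so $U_d\equiv(1-p^d)/(1-p)=1+p+\cdots+p^{d-1}\pmod 5$; using $p^4\equiv 1\pmod 5$ and that $d$ is odd, this sum depends only on $p\bmod 5$ and $d\bmod 4$, and one computes it to lie in $\{1,2\}$ when $p\equiv 2$, in $\{1,3\}$ when $p\equiv 3$, and to equal $1$ when $p\equiv 4$. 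Intersecting these possibilities with $U_d=\ell$ in each residue class of $\ell$ modulo $5$ pins the residue of $d$ modulo $5$ into the forbidden set of the theorem; for instance, when $\ell\equiv 4\pmod 5$ the options $p\equiv 2,3,4$ are all impossible, forcing $p\equiv 1\pmod 5$ and then $d\equiv\ell\equiv 4\pmod 5$.

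Putting the three steps together, the exponent $d$ is simultaneously an odd prime divisor of $|\ell|(|\ell|-1)(|\ell|+1)$ and congruent modulo $5$ to one of the excluded residues, contradicting the hypothesis; hence $a(n)\neq\ell$. I expect the genuine difficulty to lie in the first step --- making the reduction to $n=p^{d-1}$ with $d$ prime airtight, i.e.\ ruling out $a(p^e)=\pm1$ and composite exponents --- since that is where one must invoke the primitive-divisor theorem and really use the $2$-torsion hypothesis; by contrast the Lucas-sequence bookkeeping of the second step and the elementary modulo $5$ casework of the third are routine, the only subtleties being the ramified case $p\equiv 1\pmod 5$ and checking that the split on $p\bmod 5$ is exhaustive.
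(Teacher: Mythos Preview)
Your overall strategy matches the paper's: reduce to $n=p^{d-1}$ with $d$ an odd prime dividing $|\ell|(\ell^2-1)$ via Lemma~\ref{not1} and the primitive-divisor machinery, then invoke the $5$-torsion congruence $a(p^{d-1})\equiv 1+p+\cdots+p^{d-1}\pmod 5$ (Lemma~\ref{Initial congruence}, packaged in Lemma~\ref{congruences}) to constrain $d$. Your computation for $\ell\equiv 4\pmod 5$ is correct and is exactly how that case is meant to be handled.

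However, your claim that the mod-$5$ analysis ``pins the residue of $d$ modulo $5$ into the forbidden set'' uniformly in all four cases does not hold up. For $p\not\equiv 1\pmod 5$ the sequence $U_d\bmod 5$ has period dividing $4$ in $d$, not $5$; thus the condition $U_d\equiv\ell\pmod 5$ constrains $d\bmod 4$ (or merely $d\bmod 2$ when $p\equiv 4$), not $d\bmod 5$. Concretely, when $\ell\equiv 1\pmod 5$ and $p\equiv 4\pmod 5$ one has $U_d\equiv 1\pmod 5$ for \emph{every} odd $d$, so no residue of $d$ modulo $5$ is determined at all, and your argument as written does not close cases $1$--$3$. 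What actually rescues these three cases is that they are vacuous: since $\ell\neq\pm 3$ there, the prime $3$ always divides $\ell^2-1$, and $3\equiv 3\pmod 5$ already lies in the forbidden set $\{1,3\}$ (case $1$) or $\{2,3\}$ (cases $2$ and $3$), so the hypothesis is never satisfied. Only case $4$ has content, and there your argument is sound because $p\equiv 2,3,4\pmod 5$ are genuinely eliminated, forcing $p\equiv 1$ and hence $d\equiv\ell\equiv 4\pmod 5$. You should state the vacuousness of cases $1$--$3$ explicitly rather than assert that the mod-$5$ casework handles them uniformly.
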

\begin{thm}{\label{thm3.8}} Let $E / \Q$ be an elliptic curve with conductor $N$ and $f$ the corresponding newform with Fourier coefficients $a(n)$. For $r = 3,5,$ suppose that $2\cdot r$ divides $|E_{\tor}(\mathbb{Q})|$. Then $a(p^{d-1})\neq r^v$ unless $d=r$ for some $v\in \N$.
\end{thm}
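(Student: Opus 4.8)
\emph{Proof strategy.} Fix a prime $p$ of good reduction with $p\nmid 2rN$, and put $P:=a(p)$, $Q:=p$, so that $a(p^{d-1})=U_d(P,Q)$ is the $d$-th term of the Lucas sequence $U_0=0,\ U_1=1,\ U_{n+1}=PU_n-QU_{n-1}$; let $V_n=V_n(P,Q)$ denote the companion sequence. The two torsion hypotheses become two elementary facts: a rational $2$-torsion point forces $P=a(p)$ to be even (as $2\mid \#E(\F_p)=p+1-a(p)$ with $p$ odd), and a rational $r$-torsion point gives $P\equiv Q+1\pmod{r}$. The plan is to assume $a(p^{d-1})=U_d=r^v$ with $v\ge 1$ and to force $d=r$ by combining parity, the congruence $P\equiv Q+1\pmod{r}$, and the Bilu--Hanrot--Voutier theorem on primitive prime divisors of Lucas sequences.

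First I would use parity together with a single congruence to pin $d$ down to a multiple of $r$. Since $P$ is even and $V_0=2$, $V_1=P$, the recursion $V_n=PV_{n-1}-QV_{n-2}$ makes every $V_n$ even; so if $d=2m$ then $U_d=U_mV_m$ is even (using the identity $U_{2m}=U_mV_m$), which is impossible for $U_d=r^v$ with $r$ odd. Hence $d$ is odd. Reducing mod $r$, the polynomial $x^2-Px+Q$ splits as $(x-1)(x-Q)$, so $U_d\equiv 1+Q+\cdots+Q^{d-1}\pmod{r}$; as $r\mid U_d$, this sum is $\equiv 0\pmod{r}$. If $Q\not\equiv 1\pmod{r}$ this forces $\ord_r(Q)\mid d$, but $\ord_r(Q)$ is a divisor of $r-1\in\{2,4\}$ that exceeds $1$, hence is even, contradicting that $d$ is odd; therefore $Q=p\equiv 1\pmod{r}$, and then $1+Q+\cdots+Q^{d-1}\equiv d\pmod{r}$ gives $r\mid d$. (The same argument disposes of the degenerate case $a(p)=0$, which would force $Q\equiv -1\pmod{r}$ and again an even order.) Moreover $D:=P^2-4Q\equiv(Q+1)^2-4Q=(Q-1)^2\equiv 0\pmod{r}$, so $r\mid D$, while $P\equiv 2\not\equiv 0\pmod{r}$ and $|P|\le 2\sqrt{p}$ ensure $(P,Q)$ is a non-degenerate Lucas pair, so Bilu--Hanrot--Voutier is available.

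It remains to improve $r\mid d$ to $d=r$, and this is where the real work sits. If $d\ne r$ then, being odd and a multiple of $r$, $d=rm$ with $m\ge 3$ odd, so $d\ge 9$. By Bilu--Hanrot--Voutier, $U_d$ has a primitive prime divisor for every $d>30$, and for $d\le 30$ the indices admitting a defective Lucas pair form a short explicit list that includes none of $9,15,21,25,27$ --- which are the only candidates here ($d\in\{9,15,21,27\}$ when $r=3$, and $d\in\{15,25\}$ when $r=5$). So in every surviving case $U_d$ has a primitive prime divisor $q$; by definition $q\nmid D$, whereas $r\mid D$, so $q\ne r$, contradicting $q\mid U_d=r^v$. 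Hence $d=r$. I expect the genuine obstacle to be this last step: one must invoke the full Bilu--Hanrot--Voutier classification of defective pairs at small indices rather than just the asymptotic $d>30$ statement, but since the handful of admissible $d$ are not defective indices, they are all eliminated at once, so the difficulty is really one of bookkeeping.
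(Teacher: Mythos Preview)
Your proof is correct and follows essentially the same route as the paper: use the $r$-torsion congruence $a(p^{d-1})\equiv 1+p+\cdots+p^{d-1}\pmod r$ to force $p\equiv 1\pmod r$ and $r\mid d$, then appeal to primitive prime divisors to eliminate $d>r$. The paper's version is terser---it observes that $r\mid u_{d-r}=a(p^{d-1-r})$, so $u_d=r^v$ would be defective, and asserts this is impossible---while you additionally use the $2$-torsion to pin $d$ down to an odd multiple of $r$ and then check directly that the remaining indices $d\in\{9,15,21,25,27\}$ are absent from the BHV defective list; this is a slightly cleaner way to close the argument than inspecting which \emph{values} can be defective.
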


For odd weights $k\geq 3$ newforms, we have the following result. 
\begin{thm}{\label{not1weightodd}}
Let $\gcd(n,2\cdot N)=1$. Then $a(p^{d-1})\neq \pm 1$ and for $n>1$, we also have $a(n)\neq \pm 1$. Furthermore, if $a(n)=\pm \ell$ for some prime $\ell$, then $n=p^{d-1}$ where $d|\ell(\ell^2-1)$ is odd. If $\pm \ell$ is not defective, then $d$ is an odd prime.
\end{thm}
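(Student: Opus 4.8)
The plan is to recast every assertion as a statement about a Lucas sequence of the first kind and then apply the primitive divisor theorem of Bilu, Hanrot and Voutier, in the spirit of \cite{BCOT} and \cite{AH}. For a prime $p\nmid 2N$, let $\alpha_{p},\beta_{p}$ be the roots of $X^{2}-a(p)X+\chi(p)p^{k-1}$ and put $U_{m}=U_{m}(P,Q)$ for the Lucas sequence with $P:=a(p)$ and $Q:=\chi(p)p^{k-1}$, so that $a(p^{m})=U_{m+1}$ for all $m\ge 0$, $U_{1}=1$, and $U_{2}=P$. Since $\chi$ is quadratic, $Q=\pm p^{k-1}$, and since $k$ is odd, $k-1$ is even, so $|Q|=p^{k-1}$ is a \emph{perfect square} with $|Q|\ge 9$; triviality of the residual mod $2$ representation gives $2\mid a(p)$, so $P$ is even. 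If $p\mid a(p)$, then Deligne's bound $|a(p)|\le 2p^{(k-1)/2}$ together with the parity of $P$ forces $P\in\{0,\pm 2p^{(k-1)/2}\}$, and a direct computation in each of these degenerate cases (where $a(p^{m})$ is, up to sign, $p^{m(k-1)/2}$, or $0$, or $(m+1)p^{m(k-1)/2}$, or divisible by $p$) shows $a(p^{m})$ is never $\pm 1$ and never $\pm$ a prime; hence such $p$ play no role below and may be discarded. Otherwise $\gcd(P,Q)=1$ and $(P,Q)$ is a nondegenerate Lucas pair. Finally, since $n\mapsto a(n)$ is multiplicative on integers coprime to $N$, writing $n=\prod_{i}p_{i}^{e_{i}}$ with $\gcd(n,2N)=1$ gives $a(n)=\prod_{i}U_{e_{i}+1}$, so the theorem reduces to facts about the numbers $U_{e+1}$.

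The first point is that $U_{m}(P,Q)\ne\pm 1$ for every $m\ge 2$. For $m=2$ this is the parity of $P$, and for even $m\ge 4$ it follows from $U_{2}\mid U_{m}$. For $m=3$, if $\chi(p)=-1$ then $|U_{3}|=P^{2}+p^{k-1}\ge 9$, while if $\chi(p)=1$ then $|U_{3}|=1$ would force the perfect squares $P^{2}$ and $p^{k-1}$ to differ by $1$, impossible since the only consecutive squares are $0$ and $1$ and $p^{k-1}\ge 9$. For odd $m\ge 5$ one appeals to Bilu--Hanrot--Voutier: $U_{m}$ has a primitive prime divisor once $m>30$, and for the finitely many odd $m$ with $5\le m\le 30$ one inspects the explicit list of $m$-defective Lucas pairs and checks that none has $P$ even and $Q$ a signed perfect square of absolute value $\ge 9$. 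Consequently $a(p^{d-1})=U_{d}\ne\pm 1$ for all $d\ge 2$; and for $n>1$, if $a(n)=\pm 1$ then multiplicativity would make some $a(p^{e})=\pm 1$ with $e\ge 1$, a contradiction.

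Next suppose $a(n)=\pm\ell$ with $\ell$ an odd prime. Since no prime-power factor of $a(n)$ can equal $\pm 1$, multiplicativity forces $n$ to be a single prime power $n=p^{d-1}$ with $d\ge 2$ and $U_{d}=\pm\ell$. If $2\mid d$ then the even integer $U_{2}=P$ would divide the odd integer $\pm\ell$, which is absurd, so $d$ is odd. If $\ell=p$, then $\ell\mid Q$, and since $U_{m}\equiv P^{m-1}\pmod{\ell}$ in that case, $\ell\mid U_{d}$ would give $\ell\mid P$, contradicting $\gcd(P,Q)=1$; hence $\ell\ne p$. If two distinct primes $q_{1},q_{2}$ divided $d$, then $U_{q_{1}}$ and $U_{q_{2}}$ each divide $\pm\ell$ and (by the previous paragraph) differ from $\pm 1$, hence both equal $\pm\ell$; then the rank of apparition $r:=r(\ell)$ (the least $m$ with $\ell\mid U_{m}$) would divide $\gcd(q_{1},q_{2})=1$, forcing $\ell\mid U_{1}=1$, absurd. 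Thus $d=q^{e}$ is an odd prime power. Standard properties of Lucas sequences give $r\mid d$, together with $r\mid \ell-\leg{D}{\ell}$ when $\ell\nmid D:=P^{2}-4Q$ and $r=\ell$ when $\ell\mid D$; in all cases $r\mid \ell(\ell^{2}-1)$. To upgrade this to $d\mid \ell(\ell^{2}-1)$ it suffices to show $r=d$: if instead $r<d$, then (as $r\mid d=q^{e}$) $d/r$ is a nontrivial power of the odd prime $q$, so $d/r\ge 3$, while $U_{r}=\pm\ell$, and the substitution $\alpha\mapsto\alpha^{r}$ yields $U_{d}/U_{r}=U_{d/r}\bigl(\alpha_{p}^{r}+\beta_{p}^{r},\,Q^{r}\bigr)=\pm 1$ — a unit value of a Lucas sequence of the first kind with nondegenerate coprime integer parameters (a prime dividing both $\alpha_{p}^{r}+\beta_{p}^{r}$ and $Q$ would divide $P$) and second parameter of absolute value $\ge 9$, at an index $\ge 3$, again contradicting Bilu--Hanrot--Voutier outside the tabulated defective pairs. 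Hence $d=r\mid \ell(\ell^{2}-1)$.

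Finally, suppose $\pm\ell$ is not defective, i.e.\ $U_{d}$ has a primitive prime divisor (necessarily $\ell$ itself). For any prime $q\mid d$ we have $U_{d/q}\mid U_{d}=\pm\ell$, so $U_{d/q}\in\{\pm 1,\pm\ell\}$; the value $\pm 1$ is ruled out by the second paragraph when $d/q\ge 2$, while $U_{d/q}=\pm\ell$ would give $\ell\mid U_{d/q}$ with $d/q<d$, contradicting primitivity. Hence $d/q=1$, so $d$ is prime, and it is odd by the third paragraph. The one genuinely delicate step — and the only place where all of the hypotheses (trivial mod $2$ representation, integer coefficients, odd weight, quadratic nebentypus) are used simultaneously — is the finite verification against the Bilu--Hanrot--Voutier/Voutier classification: one must confirm that ``$Q$ is a signed perfect square of absolute value $\ge 9$ and $P$ is even'' is incompatible with every tabulated defective pair at the relevant small indices, which is exactly what underwrites both ``$U_{m}\ne\pm 1$ for $m\ge 2$'' and ``$r(\ell)=d$''. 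I expect this bookkeeping to be the main obstacle; everything else is formal.
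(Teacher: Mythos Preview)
Your approach matches the paper's: both reduce to Lucas sequences and eliminate $u_d=\pm 1$ via the Bilu--Hanrot--Voutier classification. The paper's own proof is in fact much terser than yours --- it only treats $u_3=\pm 1$ explicitly (via exactly the consecutive-squares argument you give for $m=3$, using that $p^{k-1}$ is a perfect square when $k$ is odd) and leaves the remaining assertions to the tables and to Propositions~\ref{PropA}--\ref{PropB}. Your handling of the further claims (multiplicativity, $d$ odd, $d\mid\ell(\ell^2-1)$, $d$ prime in the non-defective case) is more explicit than the paper's, and the auxiliary-sequence identity $U_d/U_r=U_{d/r}(\alpha_p^{\,r}+\beta_p^{\,r},\,Q^{r})$ you use to force $r(\ell)=d$ is a nice detail the paper does not spell out.

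There is one genuine inaccuracy. The claim ``If $p\mid a(p)$, then Deligne's bound together with the parity of $P$ forces $P\in\{0,\pm 2p^{(k-1)/2}\}$'' holds only for $k=3$; for odd $k\ge 5$ there are many multiples of $2p$ strictly between $0$ and $2p^{(k-1)/2}$ in absolute value (e.g.\ $P=\pm 2p$ when $k=5$). Your parenthetical ``or divisible by $p$'' seems intended as a catch-all, but divisibility by $p$ alone does not yet justify ``never $\pm$ a prime'', since a priori $a(p^{m})$ could equal $\pm p$. The clean fix: when $p\mid P$ one also has $p^{2}\mid Q$ (since $k-1\ge 2$), and then the recurrence $U_m=PU_{m-1}-QU_{m-2}$ gives $v_p(U_m)\ge m-1$ by induction on $m\ge 1$; hence for $m\ge 3$ either $U_m=0$ or $|U_m|\ge p^{2}$, while $U_2=P$ is even, so $U_m$ is never $\pm 1$ nor $\pm$ an odd prime for any $m\ge 2$. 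With this repair the degenerate primes may indeed be discarded. (The paper does not address this case at all, so you are being more careful here, not less.)
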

In Section \ref{Section3.2} and \ref{Section4}, we give results allowing us to state the above theorems independently of the level.

\section{Preliminaries}\label{S2}
\subsection{Lucas Sequences and their primitive prime divisors}
We recall the deep work of Bilu, Hanrot and Voutier \cite{BHV} on Lucas sequences which is central to this note.

A \textit{Lucas pair} $(\alpha,\beta)$ is a pair of algebraic integers, roots of a monic quadratic polynomial $F(x)=(x-\alpha)(x-\beta)\in \Z[x]$ where $\alpha+\beta$, $\alpha \beta$ are coprime non-zero integers and such that $\alpha/\beta$ is not a root of unity. To any Lucas pair $(\alpha,\beta)$ we can associate a sequence of integers $\{u_n(\alpha,\beta)\}=\{u_1=1, u_2=\alpha+\beta,\dots\}$ called \textit{Lucas numbers} defined by the following formula  
\begin{equation}
u_n(\alpha,\beta):=\frac{\alpha^n-\beta^n}{\alpha-\beta}.
\end{equation}
We call a prime  $\ell \mid u_{n}(\alpha,\beta)$ a {\it primitive prime divisor of $u_n(\alpha,\beta)$} if $\ell \nmid (\alpha-\beta)^2 u_1(\alpha,\beta)\cdots u_{n-1}(\alpha, \beta)$. We call a Lucas number $u_n(\alpha,\beta)$ with $n>2$ {\it defective}\footnote{We do not consider the absence of
a primitive prime divisor for $u_2(\alpha,\beta)=\alpha+\beta$ to be   a defect.}  if $u_{n}(\alpha,\beta)$ does 
not have a primitive prime divisor. Bilu, Hanrot, and Voutier \cite{BHV} proved the following theorem for all Lucas sequences.
\begin{thm}\label{Bilu}
Every Lucas number $u_n(\alpha,\beta)$, with $n>30,$
has a primitive prime divisor.
\end{thm}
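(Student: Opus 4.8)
The plan is to reproduce the strategy of Bilu, Hanrot and Voutier: translate the existence of a primitive prime divisor into a lower bound for a homogenized cyclotomic value, settle the case of real $\alpha,\beta$ by the classical growth argument, and dispose of the case of complex-conjugate $\alpha,\beta$ by feeding a sharp lower bound for linear forms in two logarithms into a finite Diophantine computation.

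First I would introduce, for $n>1$, the integer
$$\Phi_n(\alpha,\beta)=\prod_{\substack{1\le k\le n\\ \gcd(k,n)=1}}\bigl(\alpha-\zeta_n^{\,k}\beta\bigr),\qquad \zeta_n=e^{2\pi i/n},$$
and record the factorization $\alpha^n-\beta^n=\prod_{d\mid n}\Phi_d(\alpha,\beta)$, so that $u_n=\prod_{d\mid n,\,d>1}\Phi_d(\alpha,\beta)$. The elementary but crucial lemma is that every prime dividing $\Phi_n(\alpha,\beta)$ is either a primitive prime divisor of $u_n$ or divides $n$, and that a prime dividing $\gcd(\Phi_n(\alpha,\beta),n)$ is the largest prime factor $P$ of $n$ and occurs in $\Phi_n(\alpha,\beta)$ to a bounded power. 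Hence $u_n$ is defective only if $|\Phi_n(\alpha,\beta)|$ is a power of such a $P$, forcing $|\Phi_n(\alpha,\beta)|\le n$. The theorem therefore reduces to the assertion that $|\Phi_n(\alpha,\beta)|>n$ for every $n>30$ outside an explicit finite list.

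Next I would split on the arithmetic of the pair. If $\alpha,\beta$ are real, then $\alpha/\beta$ is a real number bounded away from $1$ in absolute value, and writing $\Phi_n(\alpha,\beta)=\beta^{\phi(n)}\Phi_n(\alpha/\beta)$ in terms of the cyclotomic polynomial $\Phi_n$, the classical argument of Carmichael gives $|\Phi_n(\alpha,\beta)|\gg n$ already for all $n$ past a small bound, so no defect survives for $n>30$. The genuinely hard case is when $\alpha,\beta$ are complex conjugates, say $\alpha=\sqrt{q}\,e^{i\theta}$ and $\beta=\sqrt{q}\,e^{-i\theta}$ with $q=\alpha\beta>0$; then
$$u_n=q^{(n-1)/2}\,\frac{\sin n\theta}{\sin\theta},$$
and $|\Phi_n(\alpha,\beta)|=q^{\phi(n)/2}\,\bigl|\Phi_n(e^{2i\theta})\bigr|$ is small exactly when $e^{2i\theta}$ is abnormally close to a primitive $n$th root of unity, i.e.\ when the linear form $\Lambda=2n\theta-2\pi k$ is abnormally small for some integer $k$. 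Here geometric growth is lost and the size of $\Phi_n$ is governed entirely by how well $\theta/\pi$ is approximated by rationals with denominator $n$.

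The core input is then a sharp lower bound for linear forms in two logarithms (the estimates of Laurent--Mignotte--Nesterenko) applied to $\Lambda=n\log(\alpha/\beta)-2\pi i k$: combined with the upper bound on $|\Lambda|$ coming from the hypothesis of defectiveness, it yields an explicit bound $n\le n_0$ with $n_0$ of moderate size. This is the step I expect to be the main obstacle, since one must tune the auxiliary parameters to bring $n_0$ down to a computationally accessible range and handle the degenerate configurations of $\theta$ separately. Finally, for each remaining $n\le n_0$ I would encode defectiveness---through the integer coordinates of the Lucas pair together with the finitely many admissible primes $P\mid n$---as a Thue (or Thue--Mahler) equation and solve these explicitly by standard algorithms. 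Collating the solutions reproduces the complete table of defective Lucas numbers, whose largest index is exactly $30$, which both establishes the theorem and shows the bound is sharp.
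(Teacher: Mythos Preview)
The paper does not give its own proof of this statement: it is quoted as a result of Bilu, Hanrot and Voutier \cite{BHV}, with the remark that the subsequent work of Abouzaid \cite{Abouzaid} completes the classification of defective terms. So there is nothing in the paper to compare your argument against.

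Your outline is, in broad strokes, the actual strategy of \cite{BHV}: reduce defectiveness to a size bound on the cyclotomic value $\Phi_n(\alpha,\beta)$, dispose of the real case by Carmichael-type growth, and in the complex-conjugate case convert the smallness of $\Phi_n$ into a linear form in two logarithms, apply Laurent--Mignotte--Nesterenko to bound $n$, and then finish by solving finitely many Thue equations. The one caveat is that the genuine difficulty in \cite{BHV} lies precisely in the constants: pushing the linear-forms bound down to a computable $n_0$ and then actually carrying out the Thue computations for all $n\le n_0$ is a substantial piece of work spanning dozens of pages, so your sketch, while correct as a plan, is far from a proof in itself.
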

This theorem is sharp in the sense that there are sequences for which $u_{30}(\alpha,\beta)$
does not have a primitive prime divisor. Their work, combined with a subsequent paper\footnote{This paper included a few cases which were omitted in \cite{BHV}.} 
of Abouzaid \cite{Abouzaid}, gives the {\it complete classification} of
defective Lucas numbers in two categories; a sporadic family of examples and a set of infinite parametrized families, as can be seen from Tables 1-4 in Section 1 of \cite{BHV} and Theorem 4.1 of \cite{Abouzaid}. The main arguments in our proofs will largely rely on relative divisibility properties of Lucas numbers. We now recall some of these facts\footnote{See Section 2 of \cite{BHV}.}.

\begin{prop}[Prop. 2.1 (ii) of \cite{BHV}]\label{PropA}  If $d\mid n$, then $u_d(\alpha, \beta) | u_n(\alpha,\beta).$
\end{prop}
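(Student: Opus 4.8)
The plan is to reduce the statement to the elementary polynomial factorization of $X^m - Y^m$ and then to verify that the resulting cofactor is a rational integer. First I would write $n = dm$ with $m \in \N$, which is possible since $d \mid n$. The defining formula gives $u_n(\alpha,\beta) = (\alpha^n - \beta^n)/(\alpha - \beta)$, and here the division is legitimate because $\alpha \neq \beta$: indeed $\alpha/\beta$ is not a root of unity by the definition of a Lucas pair, so in particular $\alpha \neq \beta$.

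Next I would apply the standard factorization $X^m - Y^m = (X - Y)\sum_{j=0}^{m-1} X^j Y^{m-1-j}$ with $X = \alpha^d$ and $Y = \beta^d$, which yields
\begin{equation*}
\alpha^n - \beta^n = \alpha^{dm} - \beta^{dm} = (\alpha^d - \beta^d)\sum_{j=0}^{m-1} \alpha^{dj}\beta^{d(m-1-j)}.
\end{equation*}
Dividing both sides by $\alpha - \beta$ and recognizing that $(\alpha^d - \beta^d)/(\alpha-\beta) = u_d(\alpha,\beta)$, I obtain the factorization
\begin{equation*}
u_n(\alpha,\beta) = u_d(\alpha,\beta)\cdot S, \qquad S := \sum_{j=0}^{m-1} \alpha^{dj}\beta^{d(m-1-j)}.
\end{equation*}

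It then remains to check that $S$ is a rational integer, so that this factorization is genuinely a divisibility relation in $\Z$ (recall that each $u_k(\alpha,\beta)$ is itself an integer). The key observation is that $S$ is \emph{symmetric} under the interchange $\alpha \leftrightarrow \beta$: this swap carries the summand indexed by $j$ to the summand indexed by $m-1-j$, so it merely permutes the terms of the sum. By the fundamental theorem of symmetric polynomials, $S$ therefore lies in $\Z[\alpha+\beta,\,\alpha\beta]$, and since $\alpha+\beta$ and $\alpha\beta$ are integers by the definition of a Lucas pair, we conclude $S \in \Z$. Hence $u_d(\alpha,\beta) \mid u_n(\alpha,\beta)$, as claimed.

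I do not anticipate a genuine obstacle, as this is a classical and elementary property of Lucas sequences; the only point requiring care is the integrality of the cofactor $S$, which is dispatched by the symmetry argument above rather than by any deeper input. An alternative route would express $S$ recursively in terms of the companion sequence $V_k := \alpha^k + \beta^k$ and the integer $(\alpha\beta)^d$, but the symmetric-function argument is the most transparent.
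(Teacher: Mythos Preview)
Your argument is correct and is the standard elementary proof of this classical divisibility property: the polynomial factorization $X^m-Y^m=(X-Y)\sum_{j=0}^{m-1}X^jY^{m-1-j}$ applied to $X=\alpha^d$, $Y=\beta^d$ gives $u_n=u_d\cdot S$, and the symmetry of $S$ in $\alpha,\beta$ together with $\alpha+\beta,\alpha\beta\in\Z$ forces $S\in\Z$.

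There is nothing to compare against in the paper itself: the proposition is quoted verbatim from \cite{BHV} (Proposition~2.1(ii)) and is used without proof, as is typical for such background results. Your write-up therefore supplies more detail than the paper does. The one minor remark is that the symmetric-function step, while perfectly valid, is slightly heavier than necessary: since $u_n(\alpha,\beta)$ and $u_d(\alpha,\beta)$ are already known to be rational integers and $u_d(\alpha,\beta)\neq 0$ (as $\alpha/\beta$ is not a root of unity), the quotient $S=u_n/u_d$ is automatically rational, and then integrality follows because $S$ is visibly an algebraic integer (a polynomial in the algebraic integers $\alpha,\beta$ with integer coefficients). Either justification is fine.
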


In order to keep track of the first occurrence of a prime divisor, we define $m_{\ell}(\alpha,\beta)$ to be the smallest $n\geq 2$ for which $\ell \mid u_n(\alpha,\beta)$. We note that $m_{\ell}(\alpha,\beta)=2$ if and only if
$\alpha +\beta\equiv 0\pmod {\ell}.$
\begin{prop}[Cor. 2.2\footnote{This corollary is stated for Lehmer numbers. The conclusions hold for Lucas numbers because $\ell \nmid (\alpha+\beta)$.} of \cite{BHV}]\label{PropB} If $\ell\nmid \alpha \beta$ is an odd prime with
$m_{\ell}(\alpha,\beta)>2$, then the following are true.
\begin{enumerate}
\item[1.] If $\ell \mid (\alpha-\beta)^2$, then $m_{\ell}(\alpha,\beta)=\ell.$
\item[2.] If $\ell \nmid (\alpha-\beta)^2$, then $m_{\ell}(\alpha,\beta) \mid (\ell-1)$ or $m_{\ell}(\alpha,\beta)\mid (\ell+1).$
\end{enumerate}
\end{prop}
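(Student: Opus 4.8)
The plan is to reduce the Lucas sequence modulo $\ell$ and recover $m_\ell(\alpha,\beta)$ as the multiplicative order of $\bar\alpha/\bar\beta$ in a finite field. Write $P=\alpha+\beta$ and $Q=\alpha\beta$ (coprime nonzero integers) and $D=(\alpha-\beta)^2=P^2-4Q$, so that $\alpha,\beta$ are the roots of $x^2-Px+Q$. The hypothesis $m_\ell(\alpha,\beta)>2$ says precisely that $\ell\nmid u_2=\alpha+\beta=P$. Fix a prime $\mathfrak l$ of $\mathcal O_{\Q(\sqrt D)}$ above $\ell$ and let $\bar\alpha,\bar\beta\in\overline{\F_\ell}$ denote the images of $\alpha,\beta$; since $\ell\nmid Q=\alpha\beta$, both are nonzero. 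When $\bar\alpha\neq\bar\beta$ the formula $u_n=(\alpha^n-\beta^n)/(\alpha-\beta)$ reduces to $\overline{u_n}=(\bar\alpha^n-\bar\beta^n)/(\bar\alpha-\bar\beta)$, so $\ell\mid u_n$ if and only if $(\bar\alpha/\bar\beta)^n=1$. As $u_1=1$ is never divisible by $\ell$, this identifies $m_\ell(\alpha,\beta)$ with the multiplicative order of $\bar\alpha/\bar\beta$.

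For part (2), assume $\ell\nmid D$, so $x^2-Px+Q$ has distinct roots mod $\ell$ and $\ell$ is unramified in $\Q(\sqrt D)$. If $\leg{D}{\ell}=1$ the roots lie in $\F_\ell$, whence $\bar\alpha/\bar\beta\in\F_\ell^\times$ has order dividing $\ell-1$ and $m_\ell(\alpha,\beta)\mid(\ell-1)$. If $\leg{D}{\ell}=-1$ the roots lie in $\F_{\ell^2}\setminus\F_\ell$ and are interchanged by the Frobenius $x\mapsto x^\ell$, so $\bar\beta=\bar\alpha^\ell$. Then
\[
(\bar\alpha/\bar\beta)^{\ell+1}=\bar\alpha^{(1-\ell)(\ell+1)}=\bar\alpha^{\,1-\ell^2}=1,
\]
because $\bar\alpha\in\F_{\ell^2}^\times$, and therefore $m_\ell(\alpha,\beta)\mid(\ell+1)$.

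For part (1), assume $\ell\mid D$. Then mod $\ell$ the characteristic polynomial has a repeated root $\bar\alpha=\bar\beta=a:=P/2$ (here $\ell$ odd makes $2$ invertible), and $a\neq 0$ since $\ell\nmid P$. The standard repeated-root solution of $u_{n+1}=Pu_n-Qu_{n-1}$ with $u_0=0,\,u_1=1$ is $\overline{u_n}=n\,a^{n-1}$ in $\F_\ell$. As $a\neq0$, this vanishes exactly when $\ell\mid n$, so the least $n\geq 1$ with $\ell\mid u_n$ is $n=\ell$, i.e. $m_\ell(\alpha,\beta)=\ell$.

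The only step requiring genuine care is the passage to the residue field: one must verify that the quotient defining $u_n$ really reduces as claimed at a prime above $\ell$ (this is where $\ell\nmid Q$, together with $\ell\nmid D$ or $\ell\mid D$ respectively, is used), and, in the inert case, that the nontrivial automorphism of $\F_{\ell^2}/\F_\ell$ is exactly the swap $\bar\alpha\leftrightarrow\bar\beta$ of conjugate roots. Once these structural facts are secured, the three divisibility conclusions follow immediately from elementary group theory in $\F_\ell^\times$ and $\F_{\ell^2}^\times$, with no estimation and no casework beyond the split/inert dichotomy.
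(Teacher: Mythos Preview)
Your argument is correct. The paper does not supply its own proof of this proposition: it is quoted verbatim as Corollary~2.2 of \cite{BHV}, with only the footnote remark that the Lehmer-number statement there transfers to Lucas numbers because the hypothesis $m_\ell(\alpha,\beta)>2$ forces $\ell\nmid(\alpha+\beta)$. So there is nothing to compare against beyond that one-line reduction.

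What you have written is the standard direct proof: reduce $x^2-Px+Q$ modulo a prime above $\ell$, identify $m_\ell(\alpha,\beta)$ with the multiplicative order of $\bar\alpha/\bar\beta$ when the roots are distinct, and handle the split, inert, and ramified cases via $\F_\ell^\times$, the Frobenius relation $\bar\beta=\bar\alpha^{\ell}$ in $\F_{\ell^2}^\times$, and the repeated-root formula $\overline{u_n}=n\,a^{n-1}$ respectively. Each step is sound; in particular your use of $\ell\nmid P$ (from $m_\ell>2$) to guarantee $a\neq 0$ in the ramified case is exactly what is needed. This is essentially how the result is established in \cite{BHV} as well, so your approach is not a genuinely different route but rather a self-contained reconstruction of the cited argument.
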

\begin{remark}
If $\ell|\alpha\beta$, then either $\ell|u_n(\alpha,\beta)$ for all $n$ or $\ell\nmid u_n(\alpha,\beta)$ for all $n$. 
\end{remark}

We now recall the following facts about newforms of weight $k\in \N$ and character $\chi$ that we will denote by $S_k^{new}(\Gamma_0(N),\chi)$. We suggest that the reader takes a look at the book of Cohen and Strömberg \cite{CoSt} for a thorough introduction to the theory of modular forms and to the book of Ono \cite{Ono} for a clear and concise exposition to more advanced topics. 
\begin{prop}\label{Newforms} Suppose that $f(z)=q+\sum_{n\geq2} a(n)q^n\in S_{k}(\Gamma_0(N),\chi)$ is a normalized newform with nebentypus $\chi$.
Then the following are true.
\begin{enumerate}
\item[1.] If $\gcd(n_1,n_2)=1,$ then $a(n_1 n_2)=a(n_1)a(n_2).$
\item[2.] If $p\nmid N$ is prime and $m\geq 2$, then $$a(p^m)=a(p)a(p^{m-1})-\chi(p)p^{k-1}a(p^{m-2}).
$$
\item[3.] If $p\nmid N$ is prime and $\alpha_p$ and $\beta_p$ are roots of $F_p(x):=x^2-a(p)x+\chi(p)p^{k-1},$ then
$$
a(p^m)=u_{m+1}(\alpha_p,\beta_p)=\frac{\alpha_p^{m+1}-\beta_p^{m+1}}{\alpha_p-\beta_p}.
$$   
Moreover, we have the Deligne's bound $|a(p)|\leq 2p^{\frac{k-1}{2}}$.
\end{enumerate}
\end{prop}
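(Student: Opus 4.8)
The plan is to deduce all three assertions from the single structural fact that a normalized newform is a simultaneous eigenvector for the Hecke algebra, reducing each claim to an identity among Hecke operators recorded in \cite{CoSt,Ono}. The starting point I would establish is that, since $f$ is a normalized newform, it is an eigenform for every Hecke operator $T_n$, and the normalization $a(1)=1$ forces the associated eigenvalue to equal the Fourier coefficient, i.e.\ $T_n f = a(n) f$ for all $n$ (with $U_p$ in place of $T_p$ when $p\mid N$).

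For part (1), I would invoke the multiplicativity of the Hecke operators: whenever $\gcd(n_1,n_2)=1$ one has $T_{n_1}T_{n_2}=T_{n_1 n_2}$. Applying both operators to $f$ gives $a(n_1)a(n_2)\,f = a(n_1 n_2)\,f$, and since $f\neq 0$ this yields $a(n_1 n_2)=a(n_1)a(n_2)$; equivalently, this is the Euler factorization of $L(f,s)=\sum_n a(n)n^{-s}$. For part (2), I would use the prime-power recursion in the Hecke algebra, valid for $p\nmid N$ and $m\geq 2$,
$$T_{p^m}=T_p\,T_{p^{m-1}}-\chi(p)p^{k-1}T_{p^{m-2}},$$
and again read off eigenvalues on $f$ to obtain $a(p^m)=a(p)a(p^{m-1})-\chi(p)p^{k-1}a(p^{m-2})$.

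For part (3), I would recognize this recursion as a linear recurrence of order two whose characteristic polynomial is precisely $F_p(x)=x^2-a(p)x+\chi(p)p^{k-1}$, so that Vieta's formulas give $\alpha_p+\beta_p=a(p)$ and $\alpha_p\beta_p=\chi(p)p^{k-1}$. I would then prove $a(p^m)=u_{m+1}(\alpha_p,\beta_p)$ by induction on $m$: the base cases $a(p^0)=1=u_1$ and $a(p)=\alpha_p+\beta_p=u_2$ hold, and both sequences obey the same recurrence $x_m=(\alpha_p+\beta_p)x_{m-1}-(\alpha_p\beta_p)x_{m-2}$, where for the Lucas numbers this is immediate from the closed form $u_n=(\alpha_p^n-\beta_p^n)/(\alpha_p-\beta_p)$. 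Finally, the Deligne bound follows from Deligne's proof of the Ramanujan--Petersson conjecture: $\alpha_p$ and $\beta_p$ are complex conjugates with $|\alpha_p|=|\beta_p|=p^{(k-1)/2}$ (since $|\alpha_p\beta_p|=|\chi(p)|p^{k-1}=p^{k-1}$), whence $|a(p)|=|\alpha_p+\beta_p|\leq 2p^{(k-1)/2}$.

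The only genuinely deep inputs are quoted rather than proved: the existence and multiplicative structure of the Hecke algebra together with Atkin--Lehner newform theory power parts (1) and (2), while the Deligne bound in part (3) rests on the resolution of the Weil conjectures and is by far the hardest ingredient. Everything else is the elementary linear algebra of a two-term recurrence, so the real ``obstacle'' is simply that these are textbook consequences of the machinery developed in \cite{CoSt,Ono} rather than statements to be built from first principles.
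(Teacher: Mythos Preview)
Your sketch is correct and is precisely the standard derivation one finds in the references \cite{CoSt,Ono}. Note that the paper itself does not supply a proof of this proposition: it is stated as a recalled fact (``We now recall the following facts about newforms\ldots''), so there is no alternative argument in the paper to compare against.
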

In this note, we consider Lucas sequences arising from the roots of the Frobenius polynomial 
\begin{equation}{\label{Frob}}
F_p(x):=x^2-Ax+B:=x^2-a(p)x+\chi(p)p^{k-1}=(x-\alpha_p)(x-\beta_p),
\end{equation} 
for a fixed prime $p\nmid N$ where $$u_{n}(\alpha_p,\beta_p):=a(p^{n-1})=\frac{\alpha_p^{n}-\beta_p^{n}}{\alpha_p-\beta_p},$$ 
and $|a(p)|\leq2p^{\frac{k-1}{2}}$. 

\subsection{Modular Forms and their Galois Representation}
\begin{definition}
We say that a newform $f\in S_k^{new}(\Gamma_0(N),\chi)$ has trivial residual mod 2 Galois representation if $a(p)$ is even for all $p\nmid 2\cdot N$.
\end{definition}
\begin{remark}
The condition $p\neq 2$ comes from the fact that the determinant of the representation of the Galois group evaluated at the Frobenius element needs to be nonzero in order to be invertible. Using Proposition $\ref{Newforms}$-\textit{(2)}, this implies that we can derive $a(p^d)$ to be odd if and only if $d$ is even. Similarly, we get that $a(p^d)$ is even if and only if $d$ is odd. It follows that $a(n)$ is odd if and only if $n$ is an odd square. Furthermore, requiring $f\in S_2^{new}(\Gamma_0(N))$ to have trivial residual mod $2$ Galois representation is equivalent to asking that the associated modular elliptic curve has a rational $2$-torsion point. 
\end{remark}
\section{Newforms of weight $k=2$}
We begin by studying newforms of weight $k=2$, level $N$ and trivial character $\chi$ with integer coefficients. These modular forms are interesting by themselves as they correspond to modular elliptic curves. For completeness, we recall some facts from the weight 2 case presented in \cite{AH}.

\begin{lem}[Lemma 2.1 \cite{AH}]\label{nondefect}
Assume $a(p)$ is even for primes $p\nmid 2\cdot N$. The only defective odd values $u_d(\alpha_p,\beta_p)$ are given in Table 1 by $$(d,A,B,k)\in\{(3,2,3,2),(5,2,11,2)\},$$ and in rows 1 and 2 of Table 2.
\end{lem}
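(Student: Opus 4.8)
\subsection*{Proof strategy for Lemma \ref{nondefect}}

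Write $A=a(p)$ and $B=p$ for the trace and norm of the Lucas pair $(\alpha_p,\beta_p)$ attached to a prime $p\nmid 2N$, coming from the Frobenius polynomial \eqref{Frob} with $k=2$ and trivial nebentypus. The first step is to collect the arithmetic constraints forced by the hypotheses. Triviality of the residual mod $2$ Galois representation means $A=a(p)$ is even; since $p\neq 2$, the norm $B=p$ is an \emph{odd prime}, and $\gcd(A,B)=1$ automatically, except when $a(p)=0$, in which case $\alpha_p/\beta_p=-1$ is a root of unity and $(\alpha_p,\beta_p)$ is not a Lucas pair, so that case is discarded. Deligne's bound $|a(p)|\le 2\sqrt p$ gives $(\alpha_p-\beta_p)^2=a(p)^2-4p\le 0$, and since $4p$ is never a square the discriminant is strictly negative, so $\alpha_p,\beta_p$ are complex conjugates. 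Finally, by the Remark following the definition of trivial residual representation, $u_d(\alpha_p,\beta_p)=a(p^{d-1})$ is odd exactly when $d$ is odd, while ``defective'' only makes sense for $d>2$; so the task is to classify the defective $u_d$ with $d\ge 3$ odd under the standing constraints $A$ even, $B$ an odd prime, $a(p)^2<4p$.

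The plan is to intersect these constraints with the complete classification of defective Lucas numbers. By Theorem \ref{Bilu} a defective $u_d$ has $d\le 30$, so $d\in\{3,5,7,\dots,29\}$, and by \cite{BHV} (Tables 1--4) together with Theorem 4.1 of \cite{Abouzaid} the defective Lucas numbers of these indices are exhausted by a finite list of sporadic pairs plus the parametric families for $d\in\{3,4,6\}$. Since we only allow odd $d$, the $d=4$ and $d=6$ families are irrelevant, and the argument divides in two. For $5\le d\le 29$ odd only sporadic pairs occur, and a finite check --- discarding each entry whose trace is odd, whose norm is not a positive rational prime, or whose discriminant is $\ge 0$ --- leaves exactly $(d,A,B)=(5,2,11)$, realised by $a(11)=\pm 2$, which is the row $(5,2,11,2)$ of Table 1. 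For $d=3$ one has the clean criterion that $u_3=A^2-B$ is defective if and only if $|A^2-B|$ is a power of $3$: any prime $\ell\mid u_3$ cannot divide $A$ (else $\ell\mid B$, contradicting $\gcd(A,B)=1$), so $\ell$ fails to be primitive only if $\ell\mid A^2-4B$, and then $\ell\mid (A^2-B)-(A^2-4B)=3B$ forces $\ell=3$. Imposing $A$ even, $B$ prime and $A^2<4B$ on the resulting possibilities $B=A^2\mp 3^{\,j}$ then collapses the degenerate case $j=0$, $B=A^2-1=(A-1)(A+1)$, to the single pair $B=3$ --- the sporadic entry $(3,2,3,2)$ of Table 1 --- while the remaining choices, for which $B$ can be prime for infinitely many even $A$, are exactly the two parametrised sub-families recorded as rows 1 and 2 of Table 2.

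The step I expect to be the genuine obstacle is the last one: verifying that rows 1 and 2 of Table 2 (and no other parametrised sub-family) survive the twin requirements that the norm be an \emph{actual prime} and the trace be even, and that the sporadic list for $5\le d\le 29$ really contributes only $(5,2,11,2)$. Both are finite checks, but they are delicate: one must run through every sign, residue class and equivalence convention in \cite{BHV} and \cite{Abouzaid}, rule out the cases where $B$ is forced to be even or composite, and carefully match the normalisations of those tables to the Frobenius polynomial \eqref{Frob}. By comparison, the reduction to $d\le 30$ and the derivation of the $d=3$ criterion above are routine.
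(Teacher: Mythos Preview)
The paper does not actually prove this lemma: it is imported verbatim as Lemma~2.1 of \cite{AH}, so there is no in-paper argument to compare against. That said, your strategy is the correct and essentially unique one --- intersect the Bilu--Hanrot--Voutier/Abouzaid classification with the constraints $A=a(p)$ even, $B=p$ an odd prime, and negative discriminant, after first observing that $u_d$ odd forces $d$ odd --- and it is almost certainly what \cite{AH} does as well.

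Your $d=3$ analysis is clean and correct: the computation showing that any non-primitive prime divisor of $u_3=A^2-B$ must equal $3$ (and conversely that $3\mid u_3$ is automatically non-primitive) is exactly the right way to recover rows~1--2 of Table~2 together with the isolated point $(A,B)=(2,3)$. For $d\ge 5$ odd, you are also right that only sporadic pairs occur in the BHV tables, and the filtering you describe (discard odd trace, non-prime norm, even index) does leave only $(d,A,B)=(5,\pm 2,11)$.

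Two small remarks. First, your exclusion of $a(p)=0$ is fine but deserves a word of justification that no other $a(p)$ makes $\alpha_p/\beta_p$ a root of unity; for weight~$2$ this is an easy check against $n\in\{3,4,6\}$, since $a(p)^2\in\{p,2p,3p\}$ is impossible for $p$ an odd prime and $a(p)$ even. Second, the ``delicate'' verification you flag at the end is genuinely just a table lookup once the normalisation $B=+p^{2k-1}$ (trivial nebentypus, $k=1$) is fixed; it is not a real obstacle, merely a bookkeeping chore.
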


\begin{remark}
For $u_3(\alpha_p,\beta_p)=\varepsilon 3^r$, we have $3\nmid a(p)$ and so $u_3(\alpha,\beta)$ is the first occurrence of $3$ in the sequence. If $3|a(p)$ however, then $\varepsilon 3^r$ is no longer a defective value. 
\end{remark}

We will make use of the two following fundamental results.
\begin{thm}[Modularity Theorem \cite{Modularity}]\label{Modularity}
Let $E / \Q$ be an elliptic curve with conductor $N$ and a rational $2$-torsion point. Denote the associated newform with trivial residual mod $2$ Galois representation by $f_E(z) = \sum_{ n\geq 1} a(n) q^n \in S_2^{New}(\Gamma_0(N))\cap \Z[[q]]$. Then for all primes $p\nmid 2\cdot N$ of good reduction, we have
$$ a(p) = p+1 - \#E(\F_p), $$
where $\#E(\F_p)$ denotes the number of $\F_p$-points of the elliptic curve reduced mod $p$.
\end{thm}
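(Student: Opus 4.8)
The plan is not to reprove this statement from scratch---it is the Modularity Theorem---but to reduce the particular form stated here to its standard formulation together with an elementary mod $2$ computation. First I would quote modularity in the form due to Wiles and Taylor--Wiles in the semistable case and to Breuil, Conrad, Diamond and Taylor in general \cite{Modularity}: to an elliptic curve $E/\Q$ of conductor $N$ it attaches a normalized newform $f_E=\sum_{n\geq 1}a(n)q^n\in S_2^{new}(\Gamma_0(N))$ of level exactly $N$ with $L(E,s)=L(f_E,s)$; equivalently, via Eichler--Shimura, for every prime $\ell$ the $\ell$-adic Galois representation on the Tate module $T_\ell E$ is isomorphic to the one cut out by $f_E$ inside $J_0(N)$.

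Granting this, the displayed identity is just a comparison of Euler factors. Fix a good prime $p\nmid 2N$. On the curve side, the characteristic polynomial of Frobenius on $T_\ell E$ is $X^2-a_p(E)X+p$ with $a_p(E)=p+1-\#E(\F_p)$ by the Lefschetz trace formula, so the Euler factor of $L(E,s)$ at $p$ is $(1-a_p(E)p^{-s}+p^{1-2s})^{-1}$. On the modular side, since $f_E$ has weight $2$ and trivial nebentypus, the Euler factor of $L(f_E,s)$ at $p$ is $(1-a(p)p^{-s}+p^{1-2s})^{-1}$ (consistent with Proposition~\ref{Newforms}). Matching the two gives $a(p)=p+1-\#E(\F_p)$. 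Integrality---$f_E\in\Z[[q]]$---is automatic: the eigenvalues $a(n)$ are algebraic integers in the Hecke eigenvalue field, and that field is $\Q$ precisely because the abelian variety quotient $A_{f_E}$ of $J_0(N)$ is the one-dimensional object $E$, so $a(n)\in\Z$.

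Finally I would check that ``$E$ has a rational $2$-torsion point'' agrees with ``$f_E$ has trivial residual mod $2$ Galois representation'' in the sense used above, i.e.\ $a(p)$ even for all $p\nmid 2N$. A rational point of order $2$ is a nonzero vector of $E[2]$ fixed by $G_\Q$, so the mod $2$ representation $\bar\rho\colon G_\Q\to\operatorname{GL}_2(\F_2)$ lands in the stabilizer of a line---a Borel subgroup of $\operatorname{GL}_2(\F_2)$, every element of which has trace $0$---whence $a(p)\equiv\operatorname{tr}\bar\rho(\operatorname{Frob}_p)\equiv 0\pmod 2$. Conversely, if every such $a(p)$ is even then by Brauer--Nesbitt the semisimplification of $\bar\rho$ has characteristic polynomial $(X-1)^2$ at each Frobenius, hence is trivial; thus $\bar\rho$ is an extension of the trivial character by itself and has a nonzero $G_\Q$-fixed vector, i.e.\ a rational $2$-torsion point. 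This identifies $f_E$ as the newform named in the statement.

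The only genuinely hard ingredient is the Modularity Theorem itself, which rests on the deformation theory of mod $\ell$ Galois representations and the Taylor--Wiles--Kisin $R=T$ method; in this note it is used as a black box, and beyond it there is nothing to supply but the $L$-function and mod $2$ bookkeeping sketched above.
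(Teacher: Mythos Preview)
Your proposal is correct, but there is nothing to compare it against: the paper does not prove this statement. It is quoted verbatim as the Modularity Theorem with a citation to \cite{Modularity} and used as a black box, exactly as you anticipate in your final paragraph. Everything you supply---the Euler-factor matching, integrality via the one-dimensionality of $A_{f_E}$, and the Borel/mod $2$ computation linking a rational $2$-torsion point to the evenness of $a(p)$---is additional exposition beyond what the paper offers; the paper simply states the result and moves on (the mod $2$ equivalence is gestured at in the Remark following the definition of ``trivial residual mod $2$ Galois representation'').

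One very small comment on your converse argument: the paper's \emph{definition} of ``trivial residual mod $2$ Galois representation'' is precisely that $a(p)$ is even for all $p\nmid 2N$, so strictly speaking only the forward direction (rational $2$-torsion $\Rightarrow$ $a(p)$ even) is needed to justify the phrasing of the theorem; your converse is correct but supererogatory here.
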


The following theorem of Mazur classifies the possible torsion groups of ellitpic curves of $E/\Q$.
\begin{thm}[Mazur's Theorem \cite{Maz}]{\label{Mazur}}
If $E/\Q$ is an elliptic curve, then $$E_{\tor}(\Q)\in \{\Z/N\Z\ |\ 1\le N\le 10\ \mathrm{or}\ N=12\}\cup\{\Z/2\Z \times \Z/N\Z \ |\ N=2,4,6,8\}.$$
\end{thm}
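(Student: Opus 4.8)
The plan is to recast the statement as a question about rational points on modular curves and then invoke the deep finiteness properties of their Jacobians. A rational torsion point of exact order $N$ on $E/\Q$ is precisely the datum of a non-cuspidal rational point on the modular curve $X_1(N)$, which coarsely parametrizes pairs $(E,P)$ with $P$ of order $N$; similarly a subgroup $\Z/2\Z\times\Z/N\Z$ of $E_{\tor}(\Q)$ corresponds to a rational point on the appropriate modular curve parametrizing an order-$N$ point together with the full $2$-torsion. Thus Mazur's classification is equivalent to two assertions: that for each group in the stated list the relevant curve carries (infinitely many) non-cuspidal rational points, and that for every other $N$ the non-cuspidal locus has no rational point at all.

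First I would dispose of the admissible values. For $N\in\{1,\dots,10,12\}$, and for the four product groups with $N=2,4,6,8$, the curves $X_1(N)$ have genus $0$ and possess a rational cusp, hence are isomorphic to $\mathbf{P}^1_{\Q}$. Their rational points therefore furnish explicit one-parameter families of elliptic curves realizing each torsion group, which one writes down directly from a Weierstrass parametrization. This shows that every group in the list genuinely occurs, so no further work is needed for the positive direction.

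The substance of the theorem is the non-existence statement. Using the degeneracy maps $X_1(N)\to X_1(M)$ that send an order-$N$ point to a suitable multiple of order $M\mid N$, any non-cuspidal rational point of $X_1(N)$ produces one on $X_1(M)$ for every divisor $M$ of $N$. Hence it suffices to prove emptiness of the non-cuspidal rational locus $Y_1(N)(\Q)$ at the \emph{minimal} forbidden levels: the primes $N=p\geq 11$, which can be treated uniformly, together with a finite list of composite and prime-power values such as $14,15,16,18,20,21,24,25,27$. For all of these $X_1(N)$ has genus $\geq 1$, so finiteness of rational points is no longer automatic and the Mordell--Weil group of the Jacobian must be controlled. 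The central device is Mazur's Eisenstein ideal: working with $J_0(p)$ and the Hecke algebra acting on it, one isolates the Eisenstein ideal $I$, shows that the associated Eisenstein quotient has Mordell--Weil rank zero over $\Q$, and deduces that the subgroup generated by the cuspidal divisor $(0)-(\infty)$ is finite, of order the numerator of $(p-1)/12$.

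The main obstacle, and the genuine heart of Mazur's work, is establishing this rank-zero statement and then exploiting it. Once the Eisenstein quotient is known to have finite Mordell--Weil group, the argument concludes by a formal-immersion argument: the composite of $X_1(p)$ with the map to the rank-zero quotient is shown to be a formal immersion at the cusp after reduction to a suitable characteristic, which forces a hypothetical non-cuspidal rational point to reduce to the cusp modulo several primes simultaneously, an incompatibility that yields the contradiction. I expect the hard step to be the finiteness (rank zero) of the Eisenstein quotient, which rests on a delicate analysis of the Néron model of $J_0(p)$, the specialization of the cuspidal group, and the structure of the Hecke module $J_0(p)[I]$; the remaining composite and prime-power levels, as well as the $\Z/2\Z\times\Z/N\Z$ cases, are then settled by the same circle of ideas applied to the corresponding modular curves.
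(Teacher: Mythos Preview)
The paper does not supply its own proof of this statement: Mazur's Theorem is quoted from \cite{Maz} and used as a black box, chiefly to constrain which torsion orders $r$ can coexist with a rational $2$-torsion point (so that the hypotheses $2\cdot 3\mid |E_{\tor}(\Q)|$ or $2\cdot 5\mid |E_{\tor}(\Q)|$ make sense). There is therefore no argument in the paper to compare your proposal against.

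For what it is worth, your outline is a faithful high-level summary of Mazur's original strategy: reduction to the emptiness of $Y_1(N)(\Q)$ at a minimal list of levels via the degeneracy maps, control of the Mordell--Weil group of the Eisenstein quotient of $J_0(p)$ through the Eisenstein ideal, and the formal-immersion argument at the cusps to derive a contradiction. That is the correct architecture, though the rank-zero assertion for the Eisenstein quotient is itself the deep content of \cite{Maz} and cannot be regarded as a routine step; in the context of the present paper one simply cites the result.
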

Furthermore, recall that if $E / \Q$ has good reduction at $p\nmid m$ for some $m\in \N$, then the reduction map 
\begin{equation}\label{injectivityoftorsion}
\pi \colon E(\Q) \to E(\F_p),
\end{equation}
is injective when restricted to $m$-torsion \cite{Silverman}. As a consequence, we get the following result.
\begin{lem}[Lemma 3.1 \cite{AH}]\label{lemma1}\label{Initial congruence}
Suppose that $E / \Q$ is an elliptic curve and that $r \mid\#E_{\tor}(\Q)$. Then for all primes $p\nmid 2\cdot r\cdot N$, we have
$$a(p^d) \equiv 1+p+p^2+\cdots +p^d \mod r. $$
\end{lem}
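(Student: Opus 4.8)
The plan is to combine the injectivity of the reduction map on torsion with the Modularity Theorem and a telescoping identity for $a(p^d)$. First I would fix an elliptic curve $E/\Q$ with $r \mid \#E_{\tor}(\Q)$ and a prime $p \nmid 2\cdot r \cdot N$. Since $p \nmid N$, the curve $E$ has good reduction at $p$, so the reduction map $\pi\colon E(\Q) \to E(\F_p)$ of \eqref{injectivityoftorsion} is injective on $m$-torsion for any $m$ coprime to $p$; in particular, taking $m = r$ (which is coprime to $p$ because $p \nmid r$), the group $E(\F_p)$ contains a subgroup isomorphic to the $r$-torsion of $E(\Q)$, which has order divisible by $r$. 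By Lagrange's theorem this forces $r \mid \#E(\F_p)$.

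Next I would invoke the Modularity Theorem~\ref{Modularity}: since $p \nmid 2\cdot N$ and $E$ has good reduction at $p$, we have $a(p) = p + 1 - \#E(\F_p)$, hence $\#E(\F_p) = p + 1 - a(p)$. Combining with the previous step gives the base congruence
$$ a(p) \equiv p + 1 \pmod{r}. $$
Now I would pass from $a(p)$ to $a(p^d)$ using the Hecke recursion of Proposition~\ref{Newforms}-\textit{(2)}. Since the nebentypus is trivial and $k=2$, that recursion reads $a(p^m) = a(p)a(p^{m-1}) - p\, a(p^{m-2})$. Working modulo $r$ and substituting $a(p) \equiv p+1$, one checks by induction on $d$ that $a(p^d) \equiv 1 + p + p^2 + \cdots + p^d \pmod r$: the base cases $d=0$ ($a(1)=1$) and $d=1$ are immediate, and for the inductive step one computes
$$ (p+1)(1 + p + \cdots + p^{d-1}) - p(1 + p + \cdots + p^{d-2}) \equiv 1 + p + \cdots + p^{d} \pmod r, $$
which is the desired identity. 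Equivalently, and perhaps cleaner to present, one can observe that the roots $\alpha_p,\beta_p$ of $F_p(x) = x^2 - a(p)x + p$ satisfy $\alpha_p + \beta_p \equiv p+1$ and $\alpha_p\beta_p = p$, so modulo $r$ the polynomial $F_p$ factors as $(x-1)(x-p)$; then $a(p^d) = u_{d+1}(\alpha_p,\beta_p) = \frac{\alpha_p^{d+1}-\beta_p^{d+1}}{\alpha_p-\beta_p} \equiv \frac{p^{d+1}-1}{p-1} = 1 + p + \cdots + p^d \pmod r$, with the usual care when $p \equiv 1 \pmod r$.

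The only genuine subtlety — and the step I would treat most carefully — is the reduction-of-torsion argument: one must be sure that $\gcd(r,p) = 1$ so that $\pi$ is injective on $r$-torsion, which is exactly why the hypothesis excludes primes dividing $r$ (and the exclusion of $p=2$ and $p \mid N$ guarantees good reduction and the validity of the Modularity statement). Everything after that is a formal manipulation of the Hecke recursion or, equivalently, of the Lucas sequence attached to $F_p$ modulo $r$; no further input is needed.
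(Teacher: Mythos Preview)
Your proposal is correct and matches the intended argument: the paper does not reprove this lemma (it is quoted from \cite{AH}), but it deliberately records precisely the two ingredients you use---Theorem~\ref{Modularity} and the injectivity of reduction on prime-to-$p$ torsion \eqref{injectivityoftorsion}---immediately before stating it, so your combination of $r\mid\#E(\F_p)\Rightarrow a(p)\equiv p+1\pmod r$ with the Hecke recursion is exactly the expected proof. The only cosmetic point is that your phrase ``the $r$-torsion of $E(\Q)$ has order divisible by $r$'' deserves one line of justification in general (it follows from the structure of finite abelian groups), though in the paper's applications $r\in\{3,5\}$ is prime and the issue does not arise.
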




\begin{lem}[Lemma 3.2 \cite{AH}]{\label{not1}}
If $E/\Q$ has a rational $2$ and $r$-torsion point where $r=3,5$, then for all $\gcd(n,2\cdot r\cdot N)=1$, we have $|a(n)|\neq 1$.
\end{lem}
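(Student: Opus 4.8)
The plan is to argue by contradiction, using that $\{a(p^{m})\}_{m\ge 0}$ forms a Lucas sequence to push the problem down to the single value $u_{3}$, and then to eliminate that value with the reduction map on torsion rather than with Lucas-sequence theory. (We tacitly assume $n>1$, as otherwise $a(1)=1$.)

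First I would reduce to a prime power. Suppose $|a(n)|=1$ for some $n>1$ with $\gcd(n,2rN)=1$. By multiplicativity (Proposition \ref{Newforms}(1)), $1=\prod_{p^{e}\|n}|a(p^{e})|$, so $|a(p^{e})|=1$ for every $p^{e}\|n$. Each such value is $\pm1$, hence odd, and each $p$ is odd (since $p\mid n$ and $\gcd(n,2rN)=1$), so the observation recalled in Section \ref{S2} that $a(m)$ is odd exactly when $m$ is an odd square forces every exponent $e$ to be even; in particular $n$ is a perfect square and $e\ge 2$. Fix one such prime $p$, so $p\nmid 2rN$, and set $d:=e+1$, an odd integer $\ge 3$. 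By Proposition \ref{Newforms}(3), $a(p^{e})=u_{d}(\alpha_{p},\beta_{p})=\pm1$, where $\alpha_{p},\beta_{p}$ are the roots of $x^{2}-a(p)x+p$. Taking any prime $q\mid d$ — necessarily odd with $q\ge 3$ — Proposition \ref{PropA} gives $u_{q}(\alpha_{p},\beta_{p})\mid u_{d}(\alpha_{p},\beta_{p})=\pm1$, hence $u_{q}(\alpha_{p},\beta_{p})=\pm1$; so I may assume $d=q$ is an odd prime.

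Next I would pin down $q$ using the classification of defective Lucas numbers. If $q>30$, Theorem \ref{Bilu} supplies a primitive prime divisor of $u_{q}(\alpha_{p},\beta_{p})$, which is impossible since $\pm1$ has no prime divisors. Hence $u_{q}(\alpha_{p},\beta_{p})$ is a defective odd Lucas number with $3\le q\le 29$, and Lemma \ref{nondefect} confines $(q,a(p),p,2)$ to $\{(3,2,3,2),(5,2,11,2)\}$ together with rows $1$ and $2$ of Table $2$. The entry $(5,2,11,2)$ is disposed of at once: the recursion $u_{n}=a(p)u_{n-1}-pu_{n-2}$ gives $u_{5}(\alpha_{p},\beta_{p})=a(p)^{4}-3a(p)^{2}p+p^{2}=16-132+121=5\neq\pm1$. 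A quick inspection of Table $2$ shows its entries all sit at index $q=3$, so everything comes down to the case $q=3$, where $u_{3}(\alpha_{p},\beta_{p})=a(p)^{2}-p=a(p^{2})=\pm1$.

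I expect this $q=3$ step to be the main obstacle, because $u_{3}=\pm1$ genuinely occurs for Lucas sequences (e.g.\ for $x^{2}-2x+3$, where $u_{3}=1$), so Theorem \ref{Bilu} is powerless here and the torsion hypothesis must be brought in directly through the injectivity \eqref{injectivityoftorsion} of reduction on torsion, together with the Modularity Theorem \ref{Modularity}. If $u_{3}(\alpha_{p},\beta_{p})=1$, then $p=a(p)^{2}-1=(a(p)-1)(a(p)+1)$ is prime, forcing $a(p)=\pm2$ and $p=3$: when $r=3$ this already contradicts $\gcd(n,2rN)=1$, and when $r=5$ the curve has good reduction at $3$, so its rational $5$-torsion point injects into $E(\F_{3})$ and we obtain the impossible divisibility $5\mid\#E(\F_{3})=4-a(3)\in\{2,6\}$. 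If instead $u_{3}(\alpha_{p},\beta_{p})=-1$, then $p=a(p)^{2}+1$, so $\#E(\F_{p})=p+1-a(p)=a(p)^{2}-a(p)+2$; running $a(p)$ through all residues modulo $3$ and modulo $5$ shows this quantity is never divisible by $3$ or by $5$. But $p\nmid 2rN$, so $E$ has good reduction at $p$, its rational $r$-torsion point injects into $E(\F_{p})$, and hence $r\mid\#E(\F_{p})$ — a contradiction. This closes every case and yields $|a(n)|\neq 1$; the only genuinely non-formal ingredient is the appeal to the reduction map on torsion \eqref{injectivityoftorsion} (and the Modularity Theorem) at the $q=3$ stage, everything else being bookkeeping with Lemma \ref{nondefect}.
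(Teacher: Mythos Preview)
Your proof is correct. The paper itself does not prove this lemma, citing it as Lemma~3.2 of \cite{AH}; your argument --- reducing via Lemma~\ref{nondefect} to the single case $u_{3}(\alpha_p,\beta_p)=\pm 1$ and then eliminating it with the injectivity \eqref{injectivityoftorsion} of $r$-torsion under reduction (which is exactly the mechanism behind the congruence of Lemma~\ref{Initial congruence}) --- is precisely the approach underlying the cited result.
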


\subsection{Integer points on Thue equations}{\label{IPSC}} We discuss the general approach to solve the equation $a(n)=\ell$ for some prime $\ell$, positive or negative, where $a(n)$ is the Fourier coefficient of $f\in S_k^{new}(\Gamma_0(N),\chi)\cap \Z[[q]]$. Assume that $|a(n)|\neq 1$ for some given values of $n$. Using Proposition $\ref{Newforms}$\textit{ (1)-(2)}, we can see that studying $a(n)=\ell$ is equivalent to studying $a(p^{d-1})=\ell$ for $d\big||\ell|(|\ell|-1)(|\ell|+1)$. From the two-term recurrence relation satisfied by $a(p^{d-1})$, $a(p^{d-1})=\ell$ reduces to the search of integer points on special curves. We make this statement precise now. 

Let $D$ be a non-zero integer. A polynomial equation of the form $F(X,Y)=D$, where $F(X,Y)\in \Z[X,Y]$ is a homogeneous polynomial, is called a \textit{Thue} equation. We will consider those equations arising from the series expansion of 
\begin{equation}
\label{Thue}
\frac{1}{1-\sqrt{Y}T+XT^2}=\sum_{m=0}^\infty F_m(X,Y)\cdot T^m=1+\sqrt{Y}\cdot T+(Y-X)\cdot T^2+\dots
\end{equation}
\begin{lem}
If $a(n)$ satisfies Proposition $\ref{Newforms}$, and $p\nmid N$ is a prime, then
$$F_{2m}\left(\chi(p)p^{k-1},a(p)^2\right)=a(p^{2m}).$$
\end{lem}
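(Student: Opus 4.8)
The plan is to compare the generating function \eqref{Thue} with the analogous generating function for the Lucas numbers $u_n(\alpha_p,\beta_p)=a(p^{n-1})$, and then evaluate at the specialization $(X,Y)=(\chi(p)p^{k-1},a(p)^2)$. Recall from Proposition~\ref{Newforms}-\textit{(3)} and \eqref{Frob} that $\alpha_p,\beta_p$ are the roots of $F_p(x)=x^2-a(p)x+\chi(p)p^{k-1}$, so $\alpha_p+\beta_p=a(p)$ and $\alpha_p\beta_p=\chi(p)p^{k-1}$. The standard identity for the Lucas numbers is $\sum_{n\geq 1}u_n(\alpha_p,\beta_p)T^{n-1}=\tfrac{1}{1-a(p)T+\chi(p)p^{k-1}T^2}$, which one gets by summing the geometric series $\tfrac{1}{\alpha_p-\beta_p}\bigl(\tfrac{\alpha_p}{1-\alpha_p T}-\tfrac{\beta_p}{1-\beta_p T}\bigr)$ and clearing denominators using $(1-\alpha_p T)(1-\beta_p T)=1-a(p)T+\chi(p)p^{k-1}T^2$.

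Next I would match this with \eqref{Thue}. Setting $X=\chi(p)p^{k-1}$ and $\sqrt{Y}=a(p)$ (i.e. $Y=a(p)^2$) turns the left side of \eqref{Thue} into exactly $\tfrac{1}{1-a(p)T+\chi(p)p^{k-1}T^2}$, so comparing coefficients of $T^m$ gives $F_m(\chi(p)p^{k-1},a(p)^2)=u_{m+1}(\alpha_p,\beta_p)=a(p^m)$ for every $m\geq 0$. In particular, taking $m=2m$ yields $F_{2m}(\chi(p)p^{k-1},a(p)^2)=a(p^{2m})$, which is the claim. One subtlety worth spelling out: $F_m(X,Y)$ as defined by \eqref{Thue} is only a polynomial in $X$ and $\sqrt{Y}$; it is a genuine polynomial in $X$ and $Y$ precisely when $m$ is even, since the odd powers of $\sqrt{Y}$ occur only in the odd-indexed $F_m$. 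This is why the lemma is stated for the even index $2m$, and it is the only point that needs a word of care — everything else is the formal geometric series manipulation above.

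The main (very mild) obstacle is therefore bookkeeping rather than mathematics: one must verify that the substitution $\sqrt{Y}\mapsto a(p)$ is consistent, i.e. that $F_{2m}$ really does depend on $Y$ only through integer powers so that evaluating at $Y=a(p)^2\in\Z$ makes sense and lands in $\Z$. This follows by an easy parity induction on the recursion $F_{m}(X,Y)=\sqrt{Y}\,F_{m-1}(X,Y)-X\,F_{m-2}(X,Y)$ read off from $(1-\sqrt{Y}T+XT^2)\sum F_m T^m=1$: the even-indexed terms form a closed subsystem $F_{2m}=(Y-2X)F_{2m-2}-X^2F_{2m-4}$ with polynomial-in-$(X,Y)$ coefficients and polynomial initial data $F_0=1$, $F_2=Y-X$. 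With that observation in hand the coefficient comparison is immediate and the proof is complete.
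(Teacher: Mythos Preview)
Your argument is correct and is precisely the intended one: the paper states the lemma without proof because it is immediate from comparing the defining generating function \eqref{Thue} with the standard Lucas-number generating function $\sum_{n\ge 1}u_n(\alpha_p,\beta_p)T^{n-1}=(1-a(p)T+\chi(p)p^{k-1}T^2)^{-1}$. Your remark that the restriction to even index $2m$ is exactly what guarantees $F_{2m}\in\Z[X,Y]$ (via the recursion $F_{2m}=(Y-2X)F_{2m-2}-X^2F_{2m-4}$) is the one point worth making explicit, and you handle it cleanly.
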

Hence, solving $a(p^{2m})=q$ boils down to computing integer solutions to the equation $$F_{2m}(X,Y)=\ell.$$

Methods for solving Thue equations are implemented in Sage \cite{github}, Magma, and are best suited for $m\geq 3$. For $m=1,2$, these equations often have infinitely many solutions as they do not represent curves with positive genus when the weight is $k=1,2$, hence we require extra information to infer their finiteness. In the case of weight $2$ newforms, the idea is to introduce a $3$ and $5$-torsion point to get additional congruences to avoid having to deal with infinitely many solutions for the equations $a(p^2)=\ell,\,a(p^4)=\ell$. Indeed, note that $$d\big ||\ell|(|\ell|-1)(|\ell|+1)\equiv 0\Mod{3},$$ for all $\ell$ and hence $d=3$ will always have to be checked. 
\subsection{Some Congruences}{\label{Section3.2}}
We now list congruences obtained using Lemma $\ref{Initial congruence}$.
\begin{lem}{\label{congruences}}
Let $E/\Q$ be an elliptic curve with conductor $N$ having a rational $2$ and $3$-torsion point. Consider primes $p$ for which $\gcd(p,2\cdot 3\cdot N)=1$.
\begin{itemize}
\item[1.]If $a(p^{d-1})=\ell=\pm 3$, 
then $$(p,d)\in\{(1,0),(2,0),(2,2)\Mod{3}\}.$$
\item[2.] If $a(p^{d-1})=\ell\equiv 1\Mod{3}$, then $$(p,d)\in \{(1,1),(2, \text{odd})\Mod{3}\}.$$ 
\item[3.]If $a(p^{d-1})=\ell\equiv 2[3]$, then $$(p,d)=(1,2)\Mod{3}.$$
\end{itemize}
\begin{remark}
In point 2, the last pair is problematic as $d$ is always odd. Hence, it is not possible to provide a general result in this case. 
\end{remark}
Let $E/\Q$ be an elliptic curve with conductor $N$ having a rational $2$ and $5$-torsion point. Consider primes $p$ for which $\gcd(p,2\cdot 5\cdot N)=1$.
\begin{itemize}
\item[1.] If $a(p^{d-1})=\pm 5$, then $$(p,d)\in \{(1,0),(3,0),(3,4),(4,0),(4,4),(4,2)\Mod{5}\}.$$
\item[2.] If $a(p^{d-1})=\ell\equiv 1\Mod{5}$, then $$(p,d)\in\{(1,1),(2,1),(3,1),(4,1),(4,3)\Mod{5}\}.$$
\item[3.] If $a(p^{d-1})=\ell\equiv 2\Mod{5}$, then $$(p,d)\in \{(1,2),(2,3)\Mod{5}\}.$$
\item[4.] If $a(p^{d-1})=\ell\equiv 3\Mod{5}$, then $$(p,d)\in\{(1,3),(3,3),(2,2)\Mod{5}\}.$$ 
\item[5.] If $a(p^{d-1})=\ell\equiv 4\Mod{5}$, then $$(p,d)\in \{(1,4),(3,2)\Mod{5}\}.$$
\end{itemize}

\end{lem}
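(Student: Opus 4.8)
The plan is to apply Lemma \ref{Initial congruence} directly, taking $r=3$ (respectively $r=5$) since by hypothesis $2r \mid \#E_{\tor}(\Q)$, and then to translate the congruence $a(p^{d-1}) \equiv 1 + p + \cdots + p^{d-1} \pmod r$ into a constraint on the pair $(p \bmod r,\, d \bmod r)$ via a finite case analysis. First I would record the key simplification: the geometric sum $1 + p + \cdots + p^{d-1}$ modulo $r$ depends only on $p \bmod r$ and $d \bmod r$. Indeed, if $p \equiv 1 \pmod r$ the sum is $\equiv d \pmod r$; if $p \not\equiv 1 \pmod r$ then $p$ has some multiplicative order $e \mid (r-1)$ in $(\Z/r\Z)^\times$, and $\sum_{j=0}^{d-1} p^j \equiv \frac{p^d - 1}{p - 1} \pmod r$, which is periodic in $d$ with period $e$ (and equals $0$ whenever $e \mid d$). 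So for each fixed residue $\ell \bmod r$ one gets a finite lookup: range over $p \in \{1, \dots, r-1\}$ and $d \in \{0, 1, \dots, r-1\}$ (using that only $d \bmod r$ matters, since $e \mid r-1 \mid$ lcm over the relevant moduli — more carefully, one checks $d$ modulo $\mathrm{lcm}(r, r-1)$, but the stated tables show the authors only need $d \bmod r$ because the orders $e$ dividing $r-1$ together with the $p\equiv 1$ case make $d \bmod r$ sufficient to express the answer), and retain exactly those pairs for which the geometric sum is $\equiv \ell \pmod r$.

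Concretely, for $r = 3$: when $\ell \equiv 0$, i.e. $\ell = \pm 3$, I need $1 + p + \cdots + p^{d-1} \equiv 0 \pmod 3$; checking $p \equiv 1$ forces $d \equiv 0 \pmod 3$, and $p \equiv 2$ (order $2$) gives sum $\equiv 0$ when $d$ is even, so $d \equiv 0$ or $d\equiv 2 \pmod{\ldots}$ — yielding the listed set $\{(1,0),(2,0),(2,2)\}$. For $\ell \equiv 1$: $p\equiv 1$ gives $d \equiv 1 \pmod 3$; $p \equiv 2$ gives sum $\equiv 1$ iff $d$ is odd, hence $(2,\text{odd})$. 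For $\ell \equiv 2$: $p \equiv 1$ gives $d \equiv 2$; $p \equiv 2$ with $d$ even gives $0$, $d$ odd gives $1$, so no solution with $p\equiv 2$, leaving only $(1,2)$. The $r=5$ cases are identical in spirit but longer: $(\Z/5\Z)^\times$ is cyclic of order $4$ with elements of order $1,2,4$, so I would tabulate $\sum_{j=0}^{d-1} p^j \bmod 5$ for $p \in \{1,2,3,4\}$ and $d \bmod 4$ (with $p\equiv 1$ handled by $d\bmod 5$), then for each target residue $\ell \bmod 5$ collect the surviving $(p,d)$ pairs, matching the five displayed lists.

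This lemma is essentially bookkeeping, so there is no serious obstacle; the only point requiring a little care is making sure the period in $d$ is correctly accounted for — the geometric sum modulo $r$ is genuinely periodic in $d$ with period equal to the order of $p$ (or $r$, when $p \equiv 1$), and one must verify that reducing $d$ modulo $r$ (rather than a larger modulus) still captures all cases, which works here because $r \in \{3,5\}$ is prime and the relevant orders divide $r-1$, so every congruence class needed can be named by $d \bmod r$ after a short check. I would also note explicitly, as the authors do in the embedded remark, that in the $r=3$, $\ell \equiv 1$ case the surviving pair $(2, \text{odd})$ imposes no restriction beyond parity since $d = d-1+1$ with $d-1$ even is automatic when $a(p^{d-1})$ is odd — recall from the mod $2$ discussion that $a(p^{d-1})$ odd forces $d-1$ even, i.e. $d$ odd — so this branch cannot be eliminated, which is why Theorem \ref{thmInadmissible} must leave those values (the $a(n) = 7, 13, 19, 31, 37$ with $n = p^2$) merely located rather than excluded.
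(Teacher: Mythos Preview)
Your proposal is correct and matches the paper's approach exactly: the paper offers no proof beyond the sentence ``We now list congruences obtained using Lemma~\ref{Initial congruence},'' and your case analysis on the geometric sum $1+p+\cdots+p^{d-1}\pmod r$ is precisely what that reference intends. Your caveat about the period in $d$ (order of $p$ in $(\Z/r\Z)^\times$ versus $d\bmod r$) is well taken and in fact sharper than the paper's own statement, which is somewhat loose on this point.
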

Using the above congruences extensively as well as the relative divisibility property of Lucas numbers, we get the following result.
\begin{thm}\label{thmInadmissible}
Suppose $f(z)=q+\sum_{n\geq 2}a(n)q^n\in S_2^{new}(\Gamma_0(N))\cap \Z[[q]]$ has trivial residual mod 2 Galois representation, namely, $E/\Q$ is an elliptic curve of conductor $N$ with a rational $2$-torsion point. Then the following are true.  
\begin{enumerate}
\item[] If $E/\Q$ has a rational $3$-torsion point, then for $n>1$ and $\gcd(n,2\cdot 3\cdot N)=1$, we have 
\begin{itemize}
\item [1.] If $a(n)=7,13,19,31,37$, then $n=p^2$ with $p=2\Mod{3}$.
\item[2.]If $a(n)=29$ then $n=p^{d-1}=13^4$ and $a(p)=\pm 2$.
\item[3.]If $a(n)=41$ then $n=p^{d-1}=43^4$ and $a(p)=\pm 4$.
\item[4.]If $a(n)=-19$ then $n=p^{d-1}=7^4$ and $a(p)=\pm 2$.
\item[5.] If $a(n)=-31$ then $n=p^{d-1}=7^4$ and $a(p)=\pm 4$.
\item[6.]If $a(n)=-79$ then $n=p^{d-1}=167^4$ and $a(p)=\pm 8$.
\end{itemize}
Furthermore, $$a(n)\not \in \{-1,1,5,-7,11,-13, 17,23,-37,-43,47,53,59,-61,-67,71,-73,83,89,-97\}.$$

\item[] If $E/\Q$ has a rational $5$-torsion point, then for $n>1$ and $\gcd(n,2\cdot5\cdot N)=1$, we have
\begin{itemize}
\item[1.] If $\ell\equiv 1\Mod{5}$ and $a(n)=\ell$, then $n=p^2$ and $p\equiv 4\Mod{5}$. 
\item[2.] If $\ell\equiv 2\Mod{5}$, $\ell\neq -3$ and $a(n)=\ell$, then $n=p^2$ and $p\equiv 2\Mod{5}$. 
\item[3.] If $\ell\equiv 3\Mod{5}$, $\ell\neq 3$ and $a(n)=\ell$, then $n=p^2$ and $p\equiv 1,3\Mod{5}$.
\end{itemize}
Furthermore, $$a(n)\not \in \{-1,1,-11,19,29,-31,-41,59,-61,-71,79,89,-691\}. $$
\end{enumerate}
\end{thm}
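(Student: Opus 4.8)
The plan is to reduce the statement $a(n)=\ell$ to the study of Lucas sequences attached to the Frobenius polynomial $F_p(x)=x^2-a(p)x+p$ and then run a case analysis on $\ell \bmod r$ for $r=3,5$. First I would invoke Proposition~\ref{Newforms}\,(1)--(2) together with the trivial residual mod $2$ hypothesis: since $a(n)$ is odd only when $n$ is an odd square, and since the question is about odd prime values $\ell$, the equation $a(n)=\ell$ forces $n=p^{d-1}$ for a single prime $p\nmid N$ and some $d$. By Proposition~\ref{PropA}, $a(p^{d-1})=u_d(\alpha_p,\beta_p)$ is divisible by $u_{d'}(\alpha_p,\beta_p)=a(p^{d'-1})$ for every $d'\mid d$, so if $\ell$ is prime then $u_{d'}$ is $\pm1$ for all proper divisors $d'>1$ of $d$; Lemma~\ref{not1} (applied via a rational $3$- or $5$-torsion point) rules out $|a(n)|=1$ for the relevant $n$, which collapses $d$ to be either a prime or — when small defective values intervene — one of the finitely many exceptional cases classified in Lemma~\ref{nondefect}. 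This is exactly where $d\mid |\ell|(|\ell|-1)(|\ell|+1)$ comes from: $\ell=u_d$ has no primitive prime divisor unless $d=\ell$ or, by Proposition~\ref{PropB}, $d\mid(\ell-1)$ or $d\mid(\ell+1)$.

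Next I would bring in the congruences of Lemma~\ref{congruences}. Fixing $r=3$ (resp.\ $r=5$) and a residue class of $\ell$, the lemma pins down the admissible pairs $(p,d)\bmod r$. Combined with the constraint $d\mid \ell(\ell\pm1)$ and the requirement that all proper $u_{d'}=\pm1$, for most primes $|\ell|<100$ the only surviving possibility is $d=3$, i.e.\ $n=p^2$, together with a congruence condition on $p$ read off from Lemma~\ref{congruences}; this yields the "$n=p^2$ with $p\equiv\,\cdot\pmod{r}$" conclusions. For the listed exceptional values ($\ell=29,41,-19,-31,-79$ in the $r=3$ table) the divisor $d=5$ survives, forcing $n=p^4$ and $a(p^4)=u_5(\alpha_p,\beta_p)=\ell$; here I would solve the corresponding Thue equation $F_4(p,a(p)^2)=\ell$ from~\eqref{Thue} explicitly (a genus-$1$ or genus-$2$ curve, hence finitely many integer points, computable in Magma/Sage), which produces the single solution $(p,a(p))$ recorded in each case — e.g.\ $29=u_5$ forces $p=13$, $a(p)=\pm2$. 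The "Furthermore" lists are the values of $\ell$ for which \emph{no} pair $(p,d)$ survives at all: either the congruence analysis already excludes every candidate $d$, or the residual Thue equations have no integer solutions with $p$ prime, $p\nmid 2rN$.

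For the "$a(n)\notin\{\dots\}$" part with $r=5$ I would additionally use that the exceptional Lucas numbers of Lemma~\ref{nondefect} do not meet the congruence constraints, so the only way $a(n)$ could equal a prime $\ell\equiv 4\pmod 5$ is via $d=5$ with $(p,d)\equiv(3,2)$, which is incompatible with $d=5\equiv 0$ — hence no solutions, explaining why the $\ell\equiv4\pmod5$ case is absent from the "locating" list and instead feeds the exclusion set. The bookkeeping is the substantive content: for each of the roughly fifty odd primes $|\ell|<100$ one must enumerate $d\mid\ell(\ell^2-1)$, intersect with the mod-$r$ constraints, discard $d$ whenever some proper $u_{d'}$ would have to be $\pm1$ (impossible by Lemma~\ref{not1} outside the defective list) or whenever $u_d$ would need a primitive prime divisor it cannot have, and finally solve the few surviving Thue equations.

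The main obstacle I anticipate is precisely this last point: controlling the Thue equations $F_{2m}(X,Y)=\ell$ for $m=2$ (and confirming there are no solutions for $m\ge3$, which is where Theorem~\ref{Bilu} guarantees a primitive prime divisor, contradicting primality of $\ell$). The $m=1$ case ($d=3$) is genuinely not finite on its own — that is why the rational $3$- or $5$-torsion point is indispensable, since Lemma~\ref{congruences} replaces finiteness with a clean congruence description of $p$. The delicate spots are the values of $\ell$ where a defective Lucas number from Lemma~\ref{nondefect} coincides with a congruence-admissible $(p,d)$; there one cannot simply invoke Bilu--Hanrot--Voutier and must check the explicit defective families and the sporadic table by hand to confirm they do not produce a genuine coefficient of a weight-$2$ newform with the torsion hypotheses.
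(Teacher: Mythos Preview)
Your approach is essentially identical to the paper's, which itself offers no proof beyond the sentence ``Using the above congruences extensively as well as the relative divisibility property of Lucas numbers, we get the following result'' together with the surrounding machinery (Lemma~\ref{nondefect}, Lemma~\ref{not1}, Lemma~\ref{congruences}, and the Thue--equation discussion of Section~\ref{IPSC}); you invoke exactly these ingredients in the right order and with the right logic.

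One localized correction: your paragraph on the $\ell\equiv 4\pmod 5$ exclusions is garbled. From Lemma~\ref{congruences} the admissible pairs are $(p,d)\equiv(1,4)$ or $(3,2)\pmod 5$, so $d=3$ (i.e.\ $d\equiv 3$) is indeed excluded, which is why no ``$n=p^2$'' clause appears for this residue class. But the congruences alone do \emph{not} finish the job: for instance $\ell=19$ leaves $d=19\equiv 4\pmod 5$ as a surviving candidate, and one must still run the Thue solver on $F_{18}(p,a(p)^2)=19$ to kill it. Your final ``bookkeeping'' paragraph describes exactly this procedure correctly, so the slip is only in the illustrative aside, not in the strategy. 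Also, the step ``$a(n)=\ell$ forces $n=p^{d-1}$'' needs Lemma~\ref{not1} (not just the parity observation) to exclude multiple prime--power factors; you use that lemma two sentences later, so the ingredient is present but should be moved up.
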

\begin{remark}
In the first part of the theorem, we have omitted the primes $43,61,67,73,79,97$, which are of the form $\ell\equiv 1\Mod{3}$, and the primes $-\ell\equiv 1\Mod{3}$ due to the large number of curves involved. However, following the methods outlined in this text, the interested reader will have no difficulty investigating these cases. In the second part, note that the primes of the form $\ell\equiv 4\Mod{5}$ are those of the outlined list. We've also ruled out $-691$ simply because it is a pretty number and a nice example of application of Theorem \ref{thm3.5}.  
\end{remark}
\begin{thm}
Let $E/\Q$ be an elliptic curve of conductor $N$ with a $2$ and $3$-torsion point. Let $n>1$ and $gcd(n,2\cdot 3\cdot N)=1$. If $\ell\equiv 2\Mod{3},\ell\neq 5$ and the odd prime divisors $d$ of $|\ell|(|\ell|-1)(|\ell|+1)$ are not congruent to $2\Mod{3}$, then $a(n)\neq \ell$. 
\end{thm}
\begin{thm}{\label{thm3.5}}
Let $E/\Q$ be an elliptic curve of conductor $N$ with a $2$ and $5$-torsion point. Let $n>1$ and $\gcd(n,2\cdot 5\cdot N)=1$.
\begin{itemize}
\item[1.]  If $\ell\equiv 1\Mod{5}$ and the odd prime divisors $d$ of $|\ell|(|\ell|-1)(|\ell|+1)$ are not congruent to $1,3\Mod{5}$, then $a(n)\neq \ell$. 
\item[2.] If $\ell\equiv 2\Mod{5},\ell\neq -3$ and the odd prime divisors $d$ of $|\ell|(|\ell|-1)(|\ell|+1)$ are not congruent to $2,3\Mod{5}$, then $a(n)\neq \ell$. 
\item[3.] If $\ell\equiv 3\Mod{5},\ell\neq 3$ and the odd prime divisors $d$ of $|\ell|(|\ell|-1)(|\ell|+1)$ are not congruent to $2,3\Mod{5}$, then $a(n)\neq \ell$. 
\item[4.] If $\ell\equiv 4\Mod{5}$ and the odd prime divisors $d$ of $|\ell|(|\ell|-1)(|\ell|+1)$ are not congruent to $2,4\Mod{5}$, then $a(n)\neq \ell$. 
\end{itemize}
\end{thm}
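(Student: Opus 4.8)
\textbf{Proof proposal for Theorem \ref{thm3.5}.}

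The plan is to reduce, as in Section \ref{IPSC}, the equation $a(n)=\ell$ to a statement about Lucas numbers and then use the congruences of Lemma \ref{congruences} to kill every admissible exponent $d$. First I would observe that by Proposition \ref{Newforms}-(1) and the mod $2$ Galois representation remark, $|a(n)|=|\ell|$ forces $n=p^{d-1}$ for a single prime $p\nmid 2\cdot 5\cdot N$ and some integer $d\mid |\ell|(|\ell|-1)(|\ell|+1)$, since the only odd values of $a(n)$ occur at odd squares and $\ell$ is odd; the case $d=1$ (i.e.\ $n=1$) is excluded by hypothesis $n>1$, and $|a(n)|\neq 1$ on this range by Lemma \ref{not1}. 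So it suffices to show $a(p^{d-1})\neq\ell$ for every $d>1$ dividing $|\ell|(|\ell|-1)(|\ell|+1)$.

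Next I would split on the parity of $d$. If $d$ is even then $a(p^{d-1})$ is odd, which is compatible with $a(p^{d-1})=\ell$, so I must rule these out by the congruence tables: writing $\ell\equiv j\Mod 5$ and applying the relevant item of Lemma \ref{congruences} for the $2,5$-torsion case, the pair $(p,d)\Mod 5$ is constrained, and in particular $d\Mod 5$ is forced into a short list. One then checks that the hypothesis ``odd prime divisors of $|\ell|(|\ell|-1)(|\ell|+1)$ avoid certain residues mod $5$'' is exactly what is needed so that no even $d$ with the required factorization can have all its parts consistent with the table — more precisely, any even $d>2$ has an odd prime factor $d_0\mid d$, and by Proposition \ref{PropA} we have $u_{d_0}(\alpha_p,\beta_p)\mid u_{d}(\alpha_p,\beta_p)=\ell$, so $u_{d_0}=\pm 1$ or $u_{d_0}=\pm\ell$; the first is impossible by Lemma \ref{not1}-type reasoning (or the primitive-divisor count), and the second reduces us to the odd exponent $d_0$, to which the odd-case congruence applies and contradicts the residue hypothesis. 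The genuinely odd $d>1$ are handled the same way directly: $a(p^{d-1})$ is even, hence cannot equal the odd prime $\ell$ unless $d=1$; wait — more carefully, oddness of $d$ makes $a(p^{d-1})$ even, so $a(p^{d-1})=\ell$ is already impossible for odd $\ell$. Thus the only surviving exponents are $d=2$ and even $d$, and the argument above funnels everything through an odd prime divisor $d_0\mid |\ell|(|\ell|-1)(|\ell|+1)$ whose residue mod $5$ is forbidden by hypothesis, giving the contradiction; the case $d=2$ corresponds to $n=p$ being excluded since $a(p)$ is even while $\ell$ is odd.

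For the bookkeeping I would treat the four congruence classes $\ell\equiv 1,2,3,4\Mod 5$ in turn, each time extracting from Lemma \ref{congruences} the admissible values of $d\Mod 5$ and matching them against the stated forbidden residue sets $\{1,3\}$, $\{2,3\}$, $\{2,3\}$, $\{2,4\}$; the exclusions $\ell\neq -3$ in item 2 and $\ell\neq 3$ in item 3 are precisely the borderline cases already singled out in Theorem \ref{thmInadmissible} where a small defective Lucas number (from Lemma \ref{nondefect}, the $(d,A,B,k)=(3,2,11,2)$ or $(3,2,3,2)$ rows) survives, so they must be carved out by hand. The main obstacle I anticipate is not any single deep step but the case analysis itself: ensuring that for each residue class the divisor-chasing via Proposition \ref{PropA} really does force an odd prime divisor into a forbidden residue class, and correctly handling the possibility that $u_{d_0}(\alpha_p,\beta_p)=\pm\ell$ with $d_0$ itself composite (iterate down to a prime) versus $u_{d_0}=\pm 1$ (excluded). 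Once the reduction to odd prime exponents is clean, the congruence tables do the rest mechanically.
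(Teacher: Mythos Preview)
Your overall strategy --- reduce to a prime power $n=p^{d-1}$, use Lucas divisibility, and then kill the surviving $d$ with the mod $5$ congruences of Lemma~\ref{congruences} --- is exactly the paper's approach. However, the execution has a genuine error that propagates.

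\textbf{The parity is inverted.} From the residual mod~$2$ remark, $a(p^m)$ is odd if and only if $m$ is even; hence $a(p^{d-1})$ is odd if and only if $d$ is \emph{odd}. You state the opposite (``If $d$ is even then $a(p^{d-1})$ is odd'' and ``oddness of $d$ makes $a(p^{d-1})$ even''), and your own treatment of $d=2$ contradicts your general claim. The correct split is: even $d$ are ruled out immediately because $a(p^{d-1})$ is then even, and the substantive case is $d$ odd.

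\textbf{The reduction to $d_0$ is the wrong framing.} Once $d$ is odd, you should argue directly that $d$ is an odd \emph{prime} dividing $|\ell|(|\ell|-1)(|\ell|+1)$: if $d$ were composite with prime factor $q\mid d$, then $u_q\mid u_d=\ell$ forces $u_q=\pm\ell$ (Lemma~\ref{not1} excludes $\pm1$), so $\ell$ is not primitive in $u_d$ and $u_d$ is defective; but Lemma~\ref{nondefect} shows the only odd defective values in this setting occur at $d\in\{3,5\}$, both prime, a contradiction. Then Proposition~\ref{PropB} gives $d\mid |\ell|(|\ell|^2-1)$, and Lemma~\ref{congruences} applied to the \emph{original} equation $a(p^{d-1})=\ell$ pins $d\bmod 5$ to exactly the residue sets $\{1,3\},\{2,3\},\{2,3\},\{2,4\}$ forbidden by hypothesis. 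Your alternative route --- pass to $d_0\mid d$ with $u_{d_0}=\pm\ell$ and recurse --- introduces a sign ambiguity that actually breaks the matching: if $u_{d_0}=-\ell$ and, say, $\ell\equiv 1\pmod 5$, then $-\ell\equiv 4\pmod 5$ and the congruence table permits $d_0\equiv 2,4\pmod 5$, which the hypothesis of item~1 does \emph{not} exclude. So the descent-to-$d_0$ argument, as written, does not close.

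The exclusions $\ell\neq\pm3$ are correctly identified as the defective $u_3=\pm3$ cases from Table~2, and the $u_5=5$ sporadic case is vacuous here since $5\equiv0\pmod5$. With the parity fixed and the argument rerouted through ``$d$ itself is prime,'' the proof goes through cleanly.
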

The above theorems can be made independent of the level using the following lemma.
\begin{lem}{\label{NIndep}}
Let $p|N$ be a prime and $N$ the level of the newform $f(z)$. Then $$
a_f(p^m)=a(p)a(p^{m-1})=\begin{cases} (\pm 1)^m \ \ \ \ \  &{\text {\rm if}}\ \ord_p(N)=1,\\
0 \ \ \ \ \ &{\text {\rm if}}\ \ord_p(N)\geq 2.
\end{cases}
$$
\end{lem}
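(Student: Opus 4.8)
The statement to prove is Lemma \ref{NIndep}, which describes the behavior of the Fourier coefficients $a(p^m)$ at primes $p$ dividing the level $N$. The plan is to reduce to the standard theory of Hecke operators at bad primes. First I would recall that for a normalized newform $f\in S_k^{new}(\Gamma_0(N),\chi)$ and a prime $p\mid N$, the $p$-th coefficient $a(p)$ is an eigenvalue of the Atkin--Lehner--Hecke operator $U_p$, and the two-term recurrence of Proposition \ref{Newforms}-(2) degenerates: because $p\mid N$ the Euler factor at $p$ is $(1-a(p)p^{-s})^{-1}$ rather than a quadratic, so that $a(p^m)=a(p)^m$ for all $m\geq 1$. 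This already gives the displayed identity $a_f(p^m)=a(p)a(p^{m-1})=a(p)^m$; it remains to pin down the value of $a(p)$ according to $\ord_p(N)$.

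The key input is the classical result (due to Atkin--Lehner, and recorded e.g. in Ono's book \cite{Ono}) on the local components of newforms. When $\ord_p(N)=1$, the form is special (Steinberg) at $p$, the nebentypus is unramified at $p$, and one has $a(p)^2 = \chi(p)p^{k-2}$; in the present setting $k=2$ and $\chi$ is trivial at $p$, so $a(p)^2=1$, i.e. $a(p)=\pm 1$, whence $a(p^m)=(\pm 1)^m$. When $\ord_p(N)\geq 2$ the relevant local representation is either supercuspidal or a ramified principal series / twist, and in all of these cases $a(p)=0$; hence $a(p^m)=0$ for all $m\geq 1$. I would state these two facts as the cited Atkin--Lehner input and then simply substitute $k=2$ and triviality (or at worst quadratic ramified-at-$p$ vanishing, which still forces $a(p)=0$) of the character.

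Concretely the steps are: (1) invoke Proposition \ref{Newforms}-(1) to note $a(p^m)$ is multiplicative-independent of the other prime powers, so it suffices to compute the local factor at $p$; (2) recall that for $p\mid N$ the recurrence loses its $p^{k-1}$-term, giving $a(p^m)=a(p)^m$; (3) apply the Atkin--Lehner classification to evaluate $a(p)$: $a(p)=\pm 1$ when $\ord_p(N)=1$ (using $k=2$, $\chi$ trivial), and $a(p)=0$ when $\ord_p(N)\geq 2$; (4) combine.

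The main obstacle is purely expository rather than mathematical: one must be careful that the lemma is being applied in the weight-$2$ trivial-nebentypus setting (the elliptic-curve case), since for general odd weight $k$ and quadratic $\chi$ the $\ord_p(N)=1$ value is $a(p)^2=\chi(p)p^{k-2}$, which is not $\pm1$ unless $k=2$; I would add a remark that in the odd-weight case the relevant statement is instead that $a(p)\in\{0,\pm p^{(k-2)/2}\sqrt{\chi(p)}\}$ and in particular $a(p^m)=0$ whenever $\ord_p(N)\geq 2$, which is all that is needed to make Theorems \ref{thm3.8} and \ref{not1weightodd} level-independent. Apart from that caveat, the proof is a direct citation of Atkin--Lehner theory with the weight and character specialized.
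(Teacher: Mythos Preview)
Your proposal is correct and follows the standard route; the paper itself does not supply a proof of Lemma~\ref{NIndep}, treating it as a well-known fact. In Section~\ref{Section4} the paper quotes precisely the general statement you invoke (Proposition~13.3.14 of \cite{CoSt}): for $p\mid N$ with $\chi$ definable mod $N/p$, one has $a(p)=0$ if $p^2\mid N$ and $a(p)^2=\chi_1(p)p^{k-2}$ otherwise. Specializing to $k=2$ and trivial nebentypus gives exactly $a(p)\in\{\pm 1\}$ when $\ord_p(N)=1$ and $a(p)=0$ when $\ord_p(N)\ge 2$, which combined with the degenerate Euler factor $a(p^m)=a(p)^m$ yields the lemma. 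Your argument is thus essentially the proof the paper omits, and your caveat about the odd-weight case is well taken and matches the paper's separate treatment in Section~\ref{Section4}.
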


\begin{thm}{\label{thm3.8}} Let $E / \Q$ be an elliptic curve with conductor $N$ and $f(z)$ the corresponding newform with Fourier coefficients $a(n)$. For $r = 3,5,$ suppose that $2\cdot r$ divides $|E_{\tor}(\mathbb{Q})|$. Then $a(p^{d-1})\neq r^v$ unless $d=r$ for some $v\in \N$.
\end{thm}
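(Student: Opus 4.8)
The plan is to view $a(p^{d-1})=u_d(\alpha_p,\beta_p)$ as a Lucas number attached to the Frobenius polynomial $x^2-a(p)x+p$ and to play the primitive‑prime‑divisor dichotomy of Bilu--Hanrot--Voutier (Propositions \ref{PropA} and \ref{PropB}) against the torsion congruence of Lemma \ref{Initial congruence}. I would first reduce to the case $\gcd(p,2rN)=1$: for $p\mid N$, Lemma \ref{NIndep} forces $a(p^m)\in\{0,\pm1\}$, none of which is $r^v$ once $\pm1$ is excluded via Lemma \ref{not1}, and for $p=2$ the coefficient $a(2^{d-1})$ is even when $d\ge2$, hence cannot equal the odd number $r^v$. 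Since $2r\mid|E_{\tor}(\Q)|$, the curve carries a rational $2$‑torsion point and a rational $r$‑torsion point, so the residual mod $2$ representation is trivial and Lemmas \ref{not1} and \ref{Initial congruence} apply. Two quick reductions follow: $v=0$ is harmless, as $a(p^{d-1})=1$ is ruled out for $n=p^{d-1}>1$ by Lemma \ref{not1} (the case $d=1$ being vacuous, $n=1$); and since $a(p^{d-1})$ is odd exactly when $d$ is odd while $r^v$ is odd, $d$ must be odd, so $d\ge3$.

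Next I would run the main dichotomy on $u_d:=u_d(\alpha_p,\beta_p)=r^v$ with $v\ge1$. If $u_d$ \emph{has} a primitive prime divisor, that divisor divides $r^v$ and hence equals $r$, so by definition $r\nmid u_n$ for $2\le n\le d-1$. Were $d$ composite it would have a divisor $d'$ with $2\le d'<d$, and then $u_{d'}\mid u_d=r^v$ by Proposition \ref{PropA} while $r\nmid u_{d'}$, forcing $a(p^{d'-1})=\pm1$ against Lemma \ref{not1}; hence $d$ is prime and $m_r(\alpha_p,\beta_p)=d>2$. Since $r\nmid\alpha_p\beta_p=p$, Proposition \ref{PropB} gives either $r\mid(\alpha_p-\beta_p)^2$, whence $m_r=r$ and $d=r$, or $d=m_r$ divides $r-1$ or $r+1$. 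For $r=3$ the second option is impossible for a prime $d\ge3$; for $r=5$ it leaves only $d=3$, but then $5\mid a(p^2)$ whereas Lemma \ref{Initial congruence} gives $a(p^2)\equiv 1+p+p^2\pmod5$, which runs through $\{1,2,3\}$ as $p$ varies mod $5$ — never $0$. So $d=r$.

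It remains to handle the \emph{defective} case, where $u_d$ has no primitive prime divisor. Here I would invoke Lemma \ref{nondefect}: the only odd defective values in weight $2$ occur for $(d,A,B,k)\in\{(3,2,3,2),(5,2,11,2)\}$ and in rows $1$--$2$ of Table 2, and since $B=p$ is forced to be prime these are extremely constrained. One computes $u_3=A^2-B=1$ in the first sporadic case (so $v=0$, already excluded) and $u_5=A^4-3A^2B+B^2=5$ in the second, which is a power of $r$ only when $r=5$, $v=1$ and $d=5=r$; the Table~2 families have $d=3$ and are dispatched identically — $d=r$ when $r=3$, and $5\nmid a(p^2)$ by Lemma \ref{Initial congruence} when $r=5$.

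The step I expect to be the real obstacle is precisely the $r=5$, $d=3$ configuration (and its shadow in the defective families): Proposition \ref{PropB} on its own only pins $d$ down to a divisor of $r-1$ or $r+1$, which does not force $d=r$ when $r=5$, so one genuinely needs the torsion congruence of Lemma \ref{Initial congruence} as a second, independent input, and the bookkeeping that makes these two ingredients interlock cleanly across every sporadic and parametrized defective exception is the delicate part of the argument.
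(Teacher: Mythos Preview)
Your argument is correct and reaches the same conclusion, but by a genuinely different route from the paper's. The paper leads with the torsion congruence: from Lemma~\ref{Initial congruence} (packaged as Lemma~\ref{congruences}) it deduces directly that $r\mid a(p^{d-1})$ with $d$ odd forces $p\equiv 1\pmod r$ and $r\mid d$; then for $d>r$ the same congruence gives $r\mid u_{d-r}=a(p^{d-r-1})$, so $r$ cannot be a \emph{primitive} prime divisor of $u_d=r^v$, whence $u_d$ is defective, and Lemma~\ref{nondefect} (odd defective values occur only at $d\in\{3,5\}$) finishes in one line. You instead run the BHV machinery first---Proposition~\ref{PropA} together with Lemma~\ref{not1} to force $d$ prime, then Proposition~\ref{PropB} to locate $m_r$---and invoke the torsion congruence only to dispose of the residual case $(r,d)=(5,3)$. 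The paper's argument is shorter once the congruence is in hand; yours is more structural and makes the role of each hypothesis more transparent.

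One small wrinkle worth noting: in your non-defective branch, the option ``$r\mid(\alpha_p-\beta_p)^2$, whence $m_r=r$'' in fact never occurs, since a primitive prime divisor of $u_d$ by definition does not divide $(\alpha_p-\beta_p)^2$. Thus the non-defective case is always a contradiction, and the conclusion $d=r$ is produced entirely by your defective-table analysis. This does not affect correctness, but it streamlines the logic. Your preliminary reduction to $\gcd(p,2rN)=1$ is also more explicit than the paper, which tacitly assumes it via Lemma~\ref{congruences}; note however that your claim ``$a(2^{d-1})$ is even for $d\ge 2$'' would need its own justification, as the trivial mod~$2$ representation only constrains $a(p)$ for $p\nmid 2N$.
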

\begin{proof}
Let $r=3$. Then by Lemma \ref{congruences}, $3\big| |a(p^{d-1})|$ if and only if $3|d$. Indeed, if $p\equiv 0, 2\Mod{3}$, then $a(p^{d-1})\equiv 1 \Mod{3}$ and so $p\equiv 1 \Mod{3}$ implies that $3|d$. Suppose that $a(p^{d-1})=3^v$ and $d>3$, then $a(p^{d-4})$ is also a multiple of $3$, which contradicts that $3^v$ is not defective. Thus $d=3$ is the only solution.

Let $r=5$. Then by Lemma \ref{congruences}, $5\big| |a(p^{d-1})|$ if and only if $p\equiv 1[5]$ and $5|d$. For $d>5$, we have $a(p^{d-6})$ is also a multiple of $5$, violating that $a(p^{d-1})$ is not defective. Hence $d=5$.
\end{proof}

\section{Newforms of odd weight $k\geq 3$}{\label{Section4}}
In this section, we explain the basic framework to rule out odd prime values $\pm \ell$ as Fourier coefficients of odd weight $k\geq 3$ normalized newforms with integer coefficients of level $N$ and nebentypus $\chi$ given by a quadratic Dirichlet character and trivial residual mod $2$ Galois representation.

\begin{thm}{\label{not1weightodd}}
Let $\gcd(n,2\cdot N)=1$. Then $a(p^{d-1})\neq \pm 1$ and for $n>1$ we have $a(n)\neq \pm 1$. It follows that if $a(n)=\pm \ell$ for some prime $\ell$, then $n=p^{d-1}$ where $d|\ell(\ell^2-1)$ is odd. If $\pm \ell$ is not defective, then $d$ is an odd prime.
\end{thm}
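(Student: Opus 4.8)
The plan is to argue entirely with the Lucas sequence $u_n=u_n(\alpha_p,\beta_p)=a(p^{n-1})$ attached to $F_p(x)=x^2-Ax+B$, where $A=a(p)$ and $B=\chi(p)p^{k-1}$, exploiting two features forced by the hypotheses: $A$ is even (trivial residual mod $2$ representation), and, since $k$ is odd, $B=\pm p^{k-1}$ has $|B|=p^{k-1}=\bigl(p^{(k-1)/2}\bigr)^2$ a perfect square with $(k-1)/2\geq 1$. First I would clear the degenerate case $p\mid a(p)$, in which $(\alpha_p,\beta_p)$ is not a Lucas pair. There the recurrence of Proposition \ref{Newforms} gives $p\mid a(p^{d-1})$ for all $d\geq 2$ (indeed $p^2\mid a(p^{d-1})$ once $d\geq 3$, while $a(p)$ is even and hence $\neq\pm p$), so $a(p^{d-1})$ is neither $\pm 1$ nor a prime and every assertion holds vacuously. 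From now on $(\alpha_p,\beta_p)$ is a Lucas pair, so Theorem \ref{Bilu} and Propositions \ref{PropA}, \ref{PropB} apply.

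For $a(p^{d-1})\neq\pm 1$ with $d\geq 2$: if $d$ is even then $a(p^{d-1})$ is even by the residual mod $2$ remark (in particular $u_2=A\neq\pm 1$); if $d$ is odd and composite, pick an odd prime $q\mid d$ with $q<d$, and since $u_q\mid u_d$ by Proposition \ref{PropA} it suffices to rule out $u_q=\pm 1$ for $q$ an odd prime. For $q>30$ this is immediate from Theorem \ref{Bilu}, since a primitive prime divisor forces $|u_q|\geq 2$. For $q\leq 30$ a value $u_q=\pm 1$ has no prime divisor, hence $u_q$ is defective; by the classification in Tables~1--4 of \cite{BHV} and Theorem~4.1 of \cite{Abouzaid} the only odd indices carrying a defective Lucas number are $q\in\{3,5,7,13\}$, and one checks that none of the finitely many corresponding pairs can have $A$ even together with $|B|$ a perfect square --- the odd-weight analogue of Lemma \ref{nondefect}. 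For $q=3$ this is elementary: $u_3=A^2-B=-1$ with $B=p^{k-1}$ gives $\bigl(p^{(k-1)/2}-A\bigr)\bigl(p^{(k-1)/2}+A\bigr)=1$, impossible since $p^{(k-1)/2}\geq 3$, and the remaining sign and character cases are equally easy using that $A$ is even. Finally, $a(n)\neq\pm 1$ for $n>1$ follows from multiplicativity: if $a(n)=\pm 1$ then every prime-power factor $a(p_i^{e_i})=\pm 1$ with $e_i\geq 1$, contradicting the above.

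For the last two assertions, suppose $a(n)=\pm\ell$ with $\ell$ an odd prime and $n>1$. By multiplicativity and the previous paragraph $a(n)$ has no proper prime-power factor equal to $\pm 1$, so $n=p^{d-1}$ is a single prime power with $a(p^{d-1})=\pm\ell$; as $\pm\ell$ is odd, $d-1$ is even, so $d$ is odd and $\geq 3$. Moreover $\ell\neq p$: otherwise $\ell\mid B$, and by the Remark after Proposition \ref{PropB} we would get $\ell\mid u_2=A$, contradicting $\gcd(A,B)=1$; hence $\ell\nmid\alpha_p\beta_p$. Set $m=m_\ell(\alpha_p,\beta_p)$. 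Then $\ell\mid u_d$ forces $m\mid d$, and $m\neq 2$ (else $2\mid d$), so $m>2$ and Proposition \ref{PropB} yields $m=\ell$ or $m\mid(\ell-1)$ or $m\mid(\ell+1)$, whence $m\mid\ell(\ell^2-1)$. It remains to prove $d=m$. If $d>m$, then $\ell\mid u_m$ with $m<d$ shows the unique prime $\ell$ of $u_d$ is not primitive, so $u_d$ is defective; Theorem \ref{Bilu} forces $d\leq 30$, hence $d\in\{3,5,7,13\}$ by the classification, but each of these is prime, making $m\mid d$ with $2<m<d$ impossible. Thus $d=m\mid\ell(\ell^2-1)$ and $d$ is odd. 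Lastly, if $\pm\ell$ is not defective, its unique prime divisor $\ell$ is primitive, so $m_\ell=d$; and $d$ cannot be composite, since a prime factor $q\mid d$ with $q<d$ would give $u_q=\pm\ell$ (using $u_q\neq\pm 1$), hence $m_\ell\leq q<d$. Therefore $d$ is an odd prime.

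The step I expect to be the main obstacle is the finite verification behind the odd-weight version of Lemma \ref{nondefect}: one must run through the relevant rows of the Bilu--Hanrot--Voutier tables and Abouzaid's classification for $d\in\{5,7,13\}$ (together with the parametrized families) and confirm that requiring both ``$A$ even'' and ``$|B|$ a perfect square'' kills every defective case, leaving only $d=3$. Everything else --- the reduction to prime $d$, the law of apparition $m_\ell\mid d$, the primitivity criterion, and the parity observations --- is routine bookkeeping, but it is precisely this elimination that turns ``$|a(n)|\neq 1$'' into the sharp divisibility conclusion $d\mid\ell(\ell^2-1)$.
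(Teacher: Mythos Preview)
Your proposal is correct and follows the same route as the paper: invoke the Bilu--Hanrot--Voutier classification to locate any putative $u_d=\pm 1$ among the defective tables, reduce to the parametrized rows $u_3=\pm 1$, and eliminate those via $\chi(p)p^{k-1}=a(p)^2\pm 1$ with $k-1$ even. Your write-up is in fact more complete than the paper's---the paper does not treat the degenerate case $p\mid a(p)$, nor does it spell out the law-of-apparition argument giving $d=m_\ell\mid\ell(\ell^2-1)$ for the second half of the statement. The ``main obstacle'' you flag for $d\in\{5,7,13\}$ is immediate: every sporadic defective $u_d=\pm 1$ in the odd-weight table (Table~2) has $A=\pm 1$, which is odd and hence already excluded by the trivial residual mod~$2$ hypothesis, leaving only $d=3$ as you and the paper both handle.
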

\begin{proof}
5Note that $\pm 1$ is a defective value which must be located in the first 2 rows of Table 4. In fact, the table of sporadic values does not have to be considered. In row 1 of Table 4, we have that if $a(p^{d-1})=-1$, then it must be at $u_3=a(p^2)=-1$, where the constraints imply that we must satisfy $\chi(p)p^{k-1}=a(p)^2+1$. Since $k-1$ is even, let's write $k-1=2m$. Then we're left with the equation $\chi(p)x^{2m}=y^2+1$ and clearly there are no solutions for $\chi(p)=0,-1$. If $\chi(p)=1$ then we obtain $(x^m)^2-y^2=1$ and this gives us the integer solutions $x=\pm 1,y=0$ which aren't allowed. Hence $a(p^{d-1})\neq -1$ for $\gcd(p,2\cdot N)=1$. Now for row 2, if $a(p^{d-1})=1$ then it must happen for $u_3=a(p^2)=1$ with constraints given by $\chi(p)p^{k-1}=a(p)^2-1$,$a(p)>1$. There are no solutions to this equation for $\chi(p)=0,-1,1$. Hence $a(p^{d-1})\neq \pm 1$. 
\end{proof}
\begin{lem}
The curve $a(p^2)=\pm \ell$ has no solutions if $\chi(p)=0$. If $\chi(p)=1$ then the curve has the form $(y-x^m)(y+x^m)=\pm \ell$ which has a unique solution depending on $\ell$ only. For $\chi(p)=-1$ the curve has the form $y^2+(x^m)^2=\pm \ell$ which has no solutions for $-\ell$ and has finitely many solutions for $+\ell$ as it is a sum of squares. 
\end{lem}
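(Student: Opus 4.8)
The plan is to turn the ``curve'' into a single explicit quadratic Diophantine equation and then split on the value of the quadratic character. First I would use Proposition~\ref{Newforms}(3) (or, equivalently, part~(2) with $a(1)=1$) to record the identity
\[
a(p^2) = u_3(\alpha_p,\beta_p) = \alpha_p^2+\alpha_p\beta_p+\beta_p^2 = (\alpha_p+\beta_p)^2 - \alpha_p\beta_p = a(p)^2 - \chi(p)p^{k-1}.
\]
Since $k$ is odd, $k-1$ is even; writing $k-1=2m$ and putting $y=a(p)$, $x=p$, the equation $a(p^2)=\pm\ell$ becomes $y^2 - \chi(p)(x^m)^2 = \pm\ell$. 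As $\chi$ is quadratic, $\chi(p)\in\{0,\pm1\}$, and substituting these three values produces exactly the three shapes in the statement.

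Next I would treat the three cases in turn, in each case exploiting that $\ell$ is an odd prime. For $\chi(p)=0$ the equation reads $y^2=\pm\ell$, which is impossible: $\ell$ is not a perfect square and $-\ell<0$. For $\chi(p)=1$ we get $(y-x^m)(y+x^m)=\pm\ell$; since $\ell$ is prime the two factors have absolute values $1$ and $\ell$ in some order, and since $\ell$ is odd their sum $2y$ and their difference $2x^m\ge 0$ are automatically even, so the system solves to $x^m=(\ell-1)/2$, $y=\pm(\ell+1)/2$ when the right-hand side is $+\ell$ (and $x^m=(\ell+1)/2$, $y=\pm(\ell-1)/2$ when it is $-\ell$). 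Thus $x^m$ — hence the solution — is determined by $\ell$ alone, uniformly in $N$ and $k$, and a genuine solution exists only when $(\ell\mp1)/2$ is a perfect $m$-th power. For $\chi(p)=-1$ the equation is $y^2+(x^m)^2=\pm\ell$: the value $-\ell$ is excluded because a sum of two squares is nonnegative, while $y^2+z^2=\ell$ has only finitely many solutions (none unless $\ell\equiv1\Mod{4}$, and then the bounded set coming from the essentially unique representation of $\ell$ as a sum of two squares, i.e.\ from its factorization in $\Z[i]$), hence so does $y^2+(x^m)^2=\ell$.

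I expect no real obstacle here: the whole argument rests on the identity $a(p^2)=a(p)^2-\chi(p)p^{k-1}$ together with unique factorization in $\Z$ and in $\Z[i]$. The only points demanding a little care are bookkeeping ones: stating the ``unique solution'' claim precisely (the value $x^m$ is uniquely pinned down, while $y$ is determined only up to sign, and existence is conditional on a perfect $m$-th power constraint), making sure that $k-1$ is recorded as even so that $p^{k-1}$ really is the square $(x^m)^2$, and emphasizing that the conclusions do not see the weight or the level — which is exactly the point, since in the odd-weight analogue of the weight-$2$ arguments the exponent $d=3$, i.e.\ the coefficient $a(p^2)$, always has to be handled on its own.
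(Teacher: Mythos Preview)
Your argument is correct and is precisely the approach the paper intends: the lemma is stated in the paper without proof because it is the immediate generalization of the computation carried out in the proof of Theorem~\ref{not1weightodd} just above, where the same substitution $k-1=2m$, $x=p$, $y=a(p)$ and the same case split on $\chi(p)\in\{0,1,-1\}$ already appear. Your remarks on the bookkeeping (that $x^m$ is uniquely determined while $y$ is only fixed up to sign, and that an actual solution requires $(\ell\mp1)/2$ to be an $m$-th power of a prime) are a welcome sharpening of the informal ``unique solution'' phrasing in the statement.
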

Hence, to rule out or locate any odd prime value $\ell$ as a Fourier coefficient of $f(z)$, it suffices to follow the following steps. Let $\gcd(n,2\cdot N)=1$ and $\ell$ be an odd prime.
\begin{center}

\begin{itemize}
\item [1.] By multiplicativity of the Fourier coefficients, we have that $$a(n)=\pm \ell \text{ if and only if } \prod_i a(p_i^{d_i-1})=\pm \ell.$$ 
\item[2.] Use Theorem \ref{not1weightodd}.
\item[3.] Use the Sage Thue solver \cite{github} to solve $a(p^{d-1})=\pm \ell$ and analyze the solutions. 
\end{itemize}
\end{center}

\begin{lem}[Proposition 13.3.14 \cite{CoSt}]
Let $p|N$ and assume that $\chi$ can be defined modulo $N/p$ and let $f=\sum a(n)q^n\in S_k^{new}(\Gamma_0(N),\chi)$ be a normalized eigenform. Then 
\begin{itemize}
\item [1.] If $p^2|N$, then $a(p)=0$.
\item [2.] If $p^2\nmid N$, then $a(p)^2=\chi_1(p)p^{k-2}$ where $\chi_1$ is the character modulo $N/p$ equivalent to $\chi$. 
\end{itemize}

\end{lem}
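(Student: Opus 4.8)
It is Proposition 13.3.14 of \cite{CoSt}, so in the present note it suffices to cite it; a self-contained proof would run through Atkin--Lehner--Li newform theory, and the plan is as follows. Write $N=p^rM$ with $p\nmid M$ and $r\geq1$, let $c$ be the exponent of $p$ in the conductor of $\chi$, and factor $\chi=\chi_p\chi_M$ into its $p$-part and its prime-to-$p$ part. The hypothesis that $\chi$ is defined modulo $N/p$ says exactly that $c\leq r-1$; in particular, if $r=1$ then $c=0$, so $\chi_p$ is trivial and $\chi=\chi_1$ is genuinely a character modulo $M=N/p$. Since $p\mid N$ we have $T_p=U_p$, so $f\mid U_p=a(p)f$, and the whole question is about the $U_p$-eigenvalue.

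First I would introduce the Atkin--Lehner operator $W_{p^r}$ at $p$, recall that on the new subspace it carries $f$ to $\lambda_p(f)\,f^{\rho}$ with $f^{\rho}=\sum_n\overline{a(n)}\,q^n\in S_k^{new}(\Gamma_0(N),\overline{\chi})$ and $|\lambda_p(f)|=1$ (from $W_{p^r}$ being, in the right normalization, an involution, together with Petersson orthogonality of newforms), and then establish the commutation relation between $U_p$ and $W_{p^r}$ by evaluating $f\mid U_p\mid W_{p^r}$ in two ways through the coset matrices that define $U_p$. The outcome of this analysis --- which is the general Atkin--Li classification of $a(p)$ for $p\mid N$ --- is that $a(p)=0$ unless either (i) $r=1$ and $c=0$, in which case $\overline{a(p)}\,\lambda_p(f)=\chi_M(p)\,p^{(k-2)/2}$, whence, using $|\lambda_p(f)|=1$ and the scalar value of $W_{p^r}^2$ to pin down the phase, $a(p)^2=\chi_1(p)\,p^{k-2}$; or (ii) $r=c$. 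Under the hypothesis $c\leq r-1$, case (ii) cannot occur, and case (i) occurs precisely when $r=1$; hence either $r=1$ and $a(p)^2=\chi_1(p)\,p^{k-2}$, or $r\geq2$ and $a(p)=0$, which are the two assertions.

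Conceptually the dichotomy reflects the local component $\pi_p$ at $p$ of the representation of $\mathrm{GL}_2(\Q_p)$ attached to $f$: the $U_p$-eigenvalue is nonzero exactly when $\pi_p$ is an unramified twist of the Steinberg representation (conductor exponent $1$, so $r=1$, and then the central character forces $a(p)^2=\chi_1(p)p^{k-2}$) or a ramified principal series with a single ramified inducing character (conductor exponent equal to $c$, so $r=c$), and the hypothesis $c\leq r-1$ rules out the second alternative. The main obstacle in the classical route is the bookkeeping in the commutation identity --- getting the right power of $p$, the twisting factor $\chi_M(p)$, and, when $\chi_p$ is nontrivial, the Gauss sum $\tau(\chi_p)$ into exactly the correct positions; once that identity is pinned down, both conclusions of the lemma fall out as short case checks.
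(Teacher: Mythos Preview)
Your proposal is correct and matches the paper's approach: the paper itself gives no proof and simply cites Proposition~13.3.14 of \cite{CoSt}, exactly as you note at the outset. The additional sketch you supply via Atkin--Lehner--Li theory (the $U_p$--$W_{p^r}$ commutation, the dichotomy $r=1,c=0$ versus $r=c$, and the elimination of the latter under $c\le r-1$) is a correct outline of the argument in \cite{CoSt}, though it goes beyond what the present note records.
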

Using this lemma, we easily obtain the following. 
\begin{cor}
Suppose that $\chi$ can be reduced modulo $N/p$ and call it $\chi_1$. Then $a(p^m)=0$ for all $m\geq 1$ and $a(n)=0$ for all $(n,N)\geq p$. 
\end{cor}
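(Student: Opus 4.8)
The plan is to reduce the whole statement to the single fact that $a(p)=0$, and to extract that from the lemma quoted above (Proposition 13.3.14 of \cite{CoSt}) together with the standing hypotheses of this section: $f$ has odd weight $k\ge 3$, integer Fourier coefficients, and quadratic nebentypus $\chi$. That lemma splits into two cases. If $p^2\mid N$, it gives $a(p)=0$ directly. If instead $p^2\nmid N$, then $p\nmid N/p$, so $\chi_1(p)\ne 0$, and since $\chi$ (hence $\chi_1$) is quadratic we have $\chi_1(p)=\pm 1$; the lemma then forces $a(p)^2=\pm p^{\,k-2}$. But $a(p)\in\Z$ while $k-2$ is odd, so $p^{\,k-2}$ is not a perfect square and $-p^{\,k-2}<0$; hence this second branch cannot occur, and we conclude $a(p)=0$ in every case.

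Granting $a(p)=0$, the two conclusions follow at once. At the bad prime $p\mid N$ the Hecke recursion degenerates to $a(p^m)=a(p)\,a(p^{m-1})$ (cf.\ the first equality in Lemma \ref{NIndep}, which holds in any weight, and \cite{CoSt}), so by induction $a(p^m)=a(p)^m=0$ for all $m\ge 1$. For the second claim, read the hypothesis $(n,N)\ge p$ as $p\mid(n,N)$, i.e.\ $p\mid n$ (recall $p\mid N$): writing $n=p^m n'$ with $m\ge 1$ and $p\nmid n'$, multiplicativity of the coefficients (Proposition \ref{Newforms}(1)) gives $a(n)=a(p^m)\,a(n')=0$.

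I expect the only step with any content to be the first one, and within it the observation that the parity of $k$ makes the $p^2\nmid N$ alternative vacuous; without odd weight the corollary would in fact be false there, since $\chi_1(p)=1$ would give $a(p)=\pm p^{(k-2)/2}\ne 0$. The remaining verifications are routine: that $p^2\nmid N$ is equivalent to $p\nmid N/p$ for $p\mid N$, that a quadratic Dirichlet character takes only the values $0,\pm 1$, and that $a(p^m)=a(p)^m$ at a bad prime is the standard newform relation. One should also confirm that the corollary's hypothesis ``$\chi$ can be reduced modulo $N/p$'' is exactly the hypothesis of the quoted lemma, so that $\chi_1$ is well defined.
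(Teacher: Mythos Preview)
Your argument is correct and follows essentially the same route as the paper: reduce to $a(p)=0$ via the quoted lemma, rule out the $p^2\nmid N$ branch by observing that $k-2$ is odd so $\chi_1(p)p^{k-2}$ cannot be a square of an integer, and then propagate via $a(p^m)=a(p)a(p^{m-1})$. Your write-up is in fact more careful than the paper's, which leaves implicit both the fact that $\chi_1(p)\neq 0$ when $p^2\nmid N$ and the derivation of the second claim $a(n)=0$ for $p\mid (n,N)$ from multiplicativity.
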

\begin{proof}
Recall that $a(p^m)=a(p)a(p^{m-1})$ and if $p^2|N$ the result follows immediately. For $p^2\nmid N$ we have $a(p)^2=\chi_1(p)p^{k-2}$. Since $k$ is odd, $k-2$ is odd and if $\chi_1(p)\neq 0$ then $a(p)$ cannot be an integer which is what we require for our newforms. Hence $a(p)=0$ is the only possibility. 
\end{proof}
\begin{lem}[Corollary 13.3.17 \cite{CoSt}] Let $p|N$ and assume that $\chi$ cannot be defined modulo $N/p$. If $f\in S_k^{new}(\Gamma_0(N),\chi)$ is a normalized eigenform, then $|a(p)|=p^\frac{k-1}{2}$.
\end{lem}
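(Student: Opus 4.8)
The plan is to reduce the statement to the evaluation of an Atkin--Lehner pseudo-eigenvalue. Put $r=\ord_p(N)\ge 1$. The hypothesis that $\chi$ cannot be defined modulo $N/p$ says exactly that $\ord_p(\mathfrak f(\chi))=r$: the conductor $\mathfrak f(\chi)$ always divides $N$, and it divides $N/p$ precisely when $\ord_p(\mathfrak f(\chi))<r$. Factor $\chi=\chi_p\chi'$ into its $p$-part and its prime-to-$p$ part, so $\chi_p$ is \emph{primitive} modulo $Q:=p^r$. Because $p\mid N$ the Hecke operator $T_p$ acts on $S_k^{new}(\Gamma_0(N),\chi)$ as $U_p$, the three-term recursion at $p$ degenerates, and $a(p^m)=a(p)^m$ for all $m\ge 1$; in particular $a(Q)=a(p)^r$. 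So it is enough to prove $|a(Q)|=Q^{(k-1)/2}$.

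Next I would invoke the Atkin--Lehner operator $W_Q$ on $S_k^{new}(\Gamma_0(N),\chi)$. The newform $f$ is a pseudo-eigenform for it: $f\mid W_Q=\lambda_Q(f)\,f^{\ast}$, where $f^{\ast}$ is the newform of the same weight and level with nebentypus $\overline{\chi_p}\chi'$ and $\lambda_Q(f)\in\C$. Two inputs then finish the argument. First, $|\lambda_Q(f)|=1$, which follows from $W_Q$ being unitary up to the scalar $\chi'(Q)$ for the Petersson product (equivalently, from the fact that $W_Q^2$ acts as a scalar of absolute value $1$). Second, the Atkin--Li formula for the pseudo-eigenvalue when the nebentypus is primitive at the prime in question: $a(Q)\,\lambda_Q(f)=\chi'(Q)\,g(\chi_p)\,Q^{k/2-1}$, with $g(\chi_p)$ the Gauss sum of $\chi_p$. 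Taking absolute values and using $|g(\chi_p)|=Q^{1/2}$ (valid since $\chi_p$ is primitive modulo $Q$) gives $|a(Q)|=Q^{1/2}\cdot Q^{k/2-1}=Q^{(k-1)/2}$, hence $|a(p)|=p^{(k-1)/2}$.

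A more structural alternative runs through the local Langlands correspondence. The automorphic representation $\pi=\bigotimes_v\pi_v$ attached to $f$ has, at $p$, conductor exponent $r$ and central character equal to the local component of $\chi$ at $p$, which also has conductor exponent $r$. For $\mathrm{GL}_2(\Q_p)$ this forces $\pi_p$ to be a principal series $\pi(\mu_1,\mu_2)$ with $\mu_1$ unramified and $\mu_2$ ramified of conductor exponent $r$ (a twist of Steinberg or a supercuspidal has central-character conductor exponent strictly below its own), so the $U_p$-eigenvalue on the local newvector is $\mu_1(p)\,p^{(k-1)/2}$ with $|\mu_1(p)|=1$ by unitarity of $\pi_p$. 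I would keep the Atkin--Lehner computation as the primary line, since it stays entirely within classical modular-forms language and matches the reference quoted from \cite{CoSt}.

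The main obstacle is not conceptual but one of normalization: pinning down the exact shape of the pseudo-eigenvalue identity---the power of $p$ contributed by the weight, the Gauss-sum convention, and the stray roots of unity that depend on how $W_Q$ is normalized---and giving a clean, self-contained proof that $|\lambda_Q(f)|=1$. Both are carried out in Li's work on newforms and in the Atkin--Li paper on twists of newforms and pseudo-eigenvalues of $W$-operators, which is precisely the material behind Corollary~13.3.17 of \cite{CoSt}; once the conventions are fixed consistently, the conclusion is a one-line computation with absolute values.
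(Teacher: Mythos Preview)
The paper does not prove this lemma at all: it is quoted verbatim as Corollary~13.3.17 of \cite{CoSt} and used as a black box, so there is no ``paper's own proof'' to compare against. Your proposal is a correct outline of the standard argument that lies behind that citation. The reduction to $|a(Q)|=Q^{(k-1)/2}$ via $a(p^m)=a(p)^m$ is fine, and the Atkin--Li identity $a(Q)\lambda_Q(f)=\chi'(Q)\,g(\chi_p)\,Q^{k/2-1}$ together with $|\lambda_Q(f)|=1$ and $|g(\chi_p)|=Q^{1/2}$ gives exactly the claim; this is precisely how Cohen--Str\"omberg obtain their Corollary~13.3.17 (from their Theorem~13.3.16, which records the Atkin--Li pseudo-eigenvalue formula). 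Your local-Langlands alternative is also correct and is the representation-theoretic shadow of the same computation: the hypothesis forces $\pi_p$ to be a ramified principal series with one unramified character, whence the $U_p$-eigenvalue has absolute value $p^{(k-1)/2}$. The only caveat you already flagged is the bookkeeping of normalizations in the $W_Q$-formula; once you fix a convention (as in Atkin--Li or \cite{CoSt}) the absolute-value statement is insensitive to the stray roots of unity, so there is no genuine gap.
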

\begin{cor}
Let $p|N$ and assume that $\chi$ cannot be defined modulo $N/p$. Then $a(p^m)=(\pm 1)^m(p^\frac{k-1}{2})^m$.
\end{cor}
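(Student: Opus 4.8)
The plan is to combine the preceding lemma (Corollary 13.3.17 of \cite{CoSt}) with the standard degeneracy of the Hecke recursion at primes dividing the level. First I would recall that for a normalized newform $f\in S_k^{new}(\Gamma_0(N),\chi)$ and a prime $p\mid N$, the local Euler factor at $p$ has degree one, so the three-term recursion of Proposition \ref{Newforms}-(2) loses its $\chi(p)p^{k-1}$ term and collapses to $a(p^m)=a(p)a(p^{m-1})$ for all $m\geq 1$. An immediate induction then gives $a(p^m)=a(p)^m$; this is exactly the mechanism already exploited in Lemma \ref{NIndep} for the weight-two case, and it holds verbatim here.

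Next I would invoke the previous lemma: since $\chi$ cannot be defined modulo $N/p$, we have $|a(p)|=p^{(k-1)/2}$. Because $k$ is odd, $(k-1)/2$ is a nonnegative integer, so $p^{(k-1)/2}\in\Z$, and therefore $a(p)=\pm\,p^{(k-1)/2}$, the sign being whatever is needed for $a(p)$ to be the (rational) integer our running hypotheses demand. Substituting this into $a(p^m)=a(p)^m$ yields
$$a(p^m)=\bigl(\pm\,p^{(k-1)/2}\bigr)^m=(\pm 1)^m\bigl(p^{(k-1)/2}\bigr)^m,$$
which is precisely the asserted formula.

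There is essentially no obstacle here: the statement is a formal consequence of the cited lemma together with the collapse of the Hecke relation at bad primes. The only point deserving a word of care is the reduction $a(p^m)=a(p)^m$ — one must cite the correct normalization of the local factor at $p\mid N$ so that no $\chi(p)p^{k-1}$ correction appears — which is the content recalled in Lemma \ref{NIndep} and is unaffected by passing from weight $2$ to odd weight $k\geq 3$.
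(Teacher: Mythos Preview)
Your proof is correct and matches the paper's intended argument: the corollary is stated in the paper without proof, immediately after the lemma giving $|a(p)|=p^{(k-1)/2}$, because it follows at once from that lemma together with the bad-prime recursion $a(p^m)=a(p)\,a(p^{m-1})$ (the same mechanism as in Lemma~\ref{NIndep}). Your observation that $(k-1)/2\in\Z$ for odd $k$, combined with the standing hypothesis that the coefficients are rational integers, is exactly what pins down $a(p)=\pm p^{(k-1)/2}$ rather than some other complex number of that modulus.
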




\section{Newforms of weight $k=1$}
In this section, we discuss briefly the problems arising in the case of weight $k=1$ newforms. Recall that Proposition \ref{PropA} is the main tool to determine if $d$ is prime or not.
However, we may have that $n|d$ and $u_n=-1$. From the first row of the defective table, we know that this may happen only if $n=3$, namely if $u_3=a(p^2)=-1$.  Thus, we would like to avoid $3|d$ but that is not possible as $\ell(\ell-1)(\ell+1)$ is always divisible by 3. This implies that we only have the information that $d|\ell(\ell-1)(\ell+1)$ is odd and no longer an odd prime. First, this leads us to a large amount of equations to verify in order to rule out an odd prime as a Fourier coefficient. Secondly, when $d=3$, the equation that we obtain is $$y^2=\pm\ell+\chi(p)$$ and is incredibly difficult to solve as it is still an open problem to determine if there are infinitely many primes of the form $\ell=y^2+1$.

\newpage

\section{Appendix}

\begingroup
\setlength{\tabcolsep}{3pt} 
\renewcommand{\arraystretch}{1.5}
\begin{center} 
\begin{table}[!ht]
\begin{tabular}{|c|c|}
\multicolumn{2}{c}{} \\ \hline
$(A,B)$ & Defective $u_n(\alpha, \beta)$ \\ \hline \hline
\multirow{2}{3.5em}{$(\pm 1,2^1)$} & $u_5 = -1$, $u_7 = 7$, $u_8 = \mp 3$, $u_{12} = \pm 45$, \\ & $u_{13} = -1$, $u_{18} = \pm 85$, $u_{30} = \mp 24475$ \\ \hline
$(\pm 1,3^1)$ & $u_5 = 1$, $u_{12} = \pm 160$ \\ \hline
$(\pm 1, 5^1)$ & $u_7 = 1$, $u_{12} = \mp 3024$ \\ \hline
$(\pm 2, 3^1)$ & $u_3 = 1$, $u_{10} = \mp 22$ \\ \hline
$(\pm 2,7^1)$ & $u_8 = \mp 40$ \\ \hline
$(\pm 2, 11^1)$ & $u_5 = 5$ \\ \hline
$(\pm 4, 5^1)$ & $u_6=\pm 44$\\ \hline
$(\pm 5, 7^1)$ & $u_{10} = \mp 3725$ \\ \hline
$(\pm 3, 2^3)$ & $u_3 = 1$ \\ \hline
$(\pm 5, 2^3)$ & $u_6 = \pm 85$ \\ \hline
\end{tabular}
\medskip
\captionof{table}{\textit{Sporadic family of defective $u_n(\alpha, \beta)$ satisfying equation \ref{Frob} in even weight $2k$ including $2k=2$ \cite{BCOT}.}} 
\label{table1}
\end{table}
\end{center}
\endgroup

\begingroup
\setlength{\tabcolsep}{3pt} 
\renewcommand{\arraystretch}{1.5}
\begin{center} 
\begin{table}[!ht]
\begin{tabular}{|c|c|}
\multicolumn{2}{c}{} \\ \hline
$(A,B)$ & Defective $u_n(\alpha, \beta)$ \\ \hline \hline
$(\pm 1,-1)$ & $u_5=5,u_{12}=\pm 144$\\ \hline \multirow{2}{3.5em}{$(\pm 1,2^1)$} & $u_5 = -1$, $u_7 = 7$, $u_8 = \mp 3$, $u_{12} = \pm 45$, \\ & $u_{13} = -1$, $u_{18} = \pm 85$, $u_{30} = \mp 24475$ \\ \hline
$(\pm 1, 2^2)$ & $u_{5}=5$,$u_{12}=\mp 231$ \\ \hline 
$(\pm 1,3^1)$ & $u_5 = 1$, $u_{12} = \pm 160$ \\ \hline
$(\pm 1, 5^1)$ & $u_7 = 1$, $u_{12} = \mp 3024$ \\ \hline
$(\pm 2, 3^1)$ & $u_{10} = \mp 22$ \\ \hline
$(\pm 2,7^1)$ & $u_8 = \mp 40$ \\ \hline
$(\pm 2, 11^1)$ & $u_5 = 5$ \\ \hline
$(\pm 5, 7^1)$ & $u_{10} = \mp 3725$ \\ \hline

\end{tabular}
\medskip
\captionof{table}{\textit{Sporadic family of defective $u_n(\alpha, \beta)$ satisfying equation \ref{Frob} in odd weight $k\geq 1$.}} 
\label{table1}
\end{table}
\end{center}
\endgroup

\noindent

\begingroup
\setlength{\tabcolsep}{3pt} 
\renewcommand{\arraystretch}{2.5}
\begin{center} 
\begin{table}[!ht]
\begin{small}
\begin{tabular}{|c|c|c|}
\hline
$(A,B)$ & Defective $u_n(\alpha, \beta)$ & Constraints on parameters \\ \hline \hline
$(\pm m, p)$ & $u_3 = -1$ & $m>1$ and $p = m^2+1$ \\ \hline
\multirow{2}{*}{}
$(\pm m, p^{2k-1})$ &
$u_3 = \varepsilon 3^r$ &
$\begin{aligned} &\ \ \ \ \ \ (p, \pm m)\in B_{1, k}^{r, \varepsilon} \text{ with } 3\nmid m,\\ 
&(\varepsilon,r,m)\neq (1,1,2),
\text{ and } m^2 \geq 4\varepsilon 3^{r-1} \end{aligned}$\\ \hline
{\color{black}$(\pm m, p^{2k-1})$} & {\color{black}$u_4 = \mp m$} & {\color{black}$(p,\pm m) \in B_{2,k}$ with $m > 1$ odd} \\ \hline
$(\pm m, p^{2k-1})$ & $u_4 = \pm 2{\color{black}\varepsilon}m$ & $\begin{aligned}(p,\pm m)\in B_{3,k}^\varepsilon &\text{ with } {\color{black}(\varepsilon, m) \not = (1,2)}\\ &\text{  and } m > 2
\text{  even}
\end{aligned}$ \\ \hline
$(\pm m, p^{2k-1})$ & 
$u_6 = {\color{black}\pm (-2)^rm(2m^2+(-2)^r)/3}$ &
$\begin{aligned}(p, &\pm m)\in B_{4,k}^r \text{ with } \gcd(m,6) = 1, \\
&{\color{black}(r,m) \not = (1,1)}, \text{ and } {\color{black}m^2 \geq (-2)^{r+2}}\end{aligned}$ \\ \hline
$(\pm m, p^{2k-1})$ & ${\color{black}u_6=\pm \varepsilon m(2m^2+3\varepsilon)}$ & $(p,\pm m)\in B_{5,k}^\varepsilon$ with $3\mid m$ and $m>3$\\ \hline
$(\pm m, p^{2k-1})$ & $u_6 = \pm 2^{r+1}{\color{black}\varepsilon} m(m^2 + 3 {\color{black}\varepsilon} \cdot 2^{r-1}) $ &$\begin{aligned} (p, \pm m)\in B_{6,k}^{r,\varepsilon}
\text{ with } m \equiv 3 \bmod{6} \\  \text{and } m^2 \geq 3 {\color{black}\varepsilon} \cdot 2^{r+2}\end{aligned}$ \\ \hline
\end{tabular}
\end{small}
\medskip
\captionof{table}{Parameterized family of defective $u_n(\alpha, \beta)$ satisfying equation \ref{Frob} in even weight $2k\geq 2$ \cite{BCOT}.
\label{table2}
\textit{\newline Notation: $m, k, r\in \Z^{+}$, $\varepsilon = \pm 1$, $p$ is a prime number.}}
\end{table}
\end{center}
\endgroup

\begingroup
\setlength{\tabcolsep}{3pt} 
\renewcommand{\arraystretch}{2.5}
\begin{center} 
\begin{table}[!ht]
\begin{small}
\begin{tabular}{|c|c|c|}
\hline
$(A,B)$ & Defective $u_n(\alpha, \beta)$ & Constraints on parameters \\ \hline \hline
$(\pm m, \chi(p)p^{k-1})$ & $u_3 = -1$ & $\chi(p)p^{k-1} = m^2+1$ \\ \hline
$(\pm m, \chi(p)p^{k-1})$ & $u_3 = 1$ & $\chi(p)p^{k-1} = m^2-1\text{ with }m>1$ \\ \hline
\multirow{2}{*}{}
$(\pm m, \chi(p)p^{k-1})$ &
$u_3 = \varepsilon 3^r$ &
$\begin{aligned} &\ \ \ \ \ \ \chi(p)p^{k-1}=m^2-\varepsilon 3^r\text{ with } 3\nmid m,\\ 
&(\varepsilon,r,m)\neq (1,1,2),
\text{ and } r>0 \end{aligned}$\\ \hline
$(\pm m,\chi(p)p^{k-1})$ & $u_4=\pm \varepsilon m$ & $2\chi(p)p^{k-1}=m^2-\varepsilon \text{ with }2\nmid m,m\neq 1$ \\ \hline
$(\pm m,\chi(p)p^{k-1})$ & $u_4=\pm 2\varepsilon m$ & $2\chi(p)p^{k-1}=m^2-2\varepsilon\text{ with }2|m, (\varepsilon,m)\neq(1,2)$ \\ \hline
$(\pm m, \chi(p)p^{k-1})$ &
$u_6 =(\pm 2m^3\pm m)/3$ &
$\begin{aligned} &\ \ \ \ \ \ 3\chi(p)p^{k-1}=m^2-1\text{ with } 3\nmid m>3 \end{aligned}$\\ \hline
$(\pm m,\chi(p)p^{k-1})$ & $u_6=\pm 2\varepsilon m^3\pm 3m$ & $3\chi(p)p^{k-1}=m^2-3\varepsilon \text{ with }3\mid m$ \\ \hline
$(\pm m, \chi(p)p^{k-1})$ &
$u_6 =(\pm 2m^3(-2)^r\pm m((-2)^r)^2)/3$ &
$\begin{aligned} &\ \ \ \ \ \ 12\chi(p)p^{k-1}=4m^2-(-2)^{r+2}\\ & r>0,m\equiv \pm 1[6], (r,m)\neq(1,1) \end{aligned}$\\ \hline
$(\pm m,\chi(p)p^{k-1})$ & $u_6=\pm2 m^3\varepsilon\cdot 2^r\pm 3m(2^r)^2$ & $12\chi(p)p^{k-1}=4m^2-3\cdot 2^{r+2}\varepsilon, r>0,m\equiv 3[6]$ \\ \hline
\end{tabular}
\end{small}
\medskip
\captionof{table}{Parameterized family of defective $u_n(\alpha, \beta)$ satisfying equation \ref{Frob} in odd weight.
\label{table2}
\textit{\newline Notation: $m, k, r\in \Z^{+}$, $\varepsilon = \pm 1$, $p$ is a prime number.}}
\end{table}
\end{center}
\endgroup

\end{document}